\documentclass[reqno, 11pt]{amsart}
\usepackage[top=4cm, bottom=3cm, left=3.2cm, right=3.2cm]{geometry}
\usepackage{enumerate}
\usepackage{amssymb,amsmath}
\usepackage{xcolor}
\usepackage{lineno}
\usepackage{hyperref}
\usepackage{enumitem}

\newtheorem{thm}{Theorem}

\newtheorem{lem}{Lemma}

\newtheorem{assumption}{Assumption}
\theoremstyle{definition}
\newtheorem{defn}{Definition}

\allowdisplaybreaks

\numberwithin{equation}{section} \numberwithin{lem}{section}
\numberwithin{thm}{section} \numberwithin{prop}{section}
\numberwithin{cor}{section} \numberwithin{rem}{section}
\numberwithin{defn}{section}

	\newtheorem{remark}{Remark}[section]

	\catcode`@=11 \@addtoreset{equation}{section} \catcode`@=12
\title[Stochastic parabolic-parabolic Keller-Segel system]{Global Existence and Pathwise Uniqueness for a Stochastic Parabolic-Parabolic Keller-Segel System}

\author{ Jinhuan Wang ${^1}$, Qian Li ${^1}$, Hui Huang ${^{2, *}}$}
\thanks{The work of J. Wang is partially supported by National Natural Science Foundation of China Grants No. 12171218, Liaoning Provincial Natural Science Foundation Program (2024-MS-002). H.H. is partially supported by the Start-up grant from Hunan University.}
\thanks{* Corresponding author: Hui Huang }
\begin{document}
\maketitle
\begin{center}
{\footnotesize
1-School of Mathematics and Statistics, Liaoning University, Shenyang, 110036, P. R. China \\
Email: wangjh@lnu.edu.cn; liqian990720@163.com  \\
 \smallskip
2-School of Mathematics, Hunan University, Changsha, 410000, P. R. China
\\
Email: huihuang1@hnu.edu.cn
}
	\end{center}
\maketitle
 \date{} 

\begin{abstract}
In this paper, we study a stochastic parabolic-parabolic Keller-Segel system driven by nonlocal, nonlinear multiplicative noise in a two-dimensional bounded domain. Under suitable assumptions, we establish global existence and pathwise uniqueness of a strong solution for arbitrary initial data, without imposing any smallness conditions. This sharply contrasts with the deterministic two-dimensional Keller-Segel system, which typically requires smallness assumptions on initial mass.

The main analytical challenges stem from the fully parabolic coupling, combined with a lack of coercivity and global Lipschitz continuity in both the chemotactic drift and noise terms. To overcome this, we introduce a tailored truncated system to establish local existence via Banach's fixed point theorem. Using this local existence and pathwise uniqueness, we construct a maximal local strong solution. Finally, by introducing a specialized Lyapunov functional, we derive uniform estimates to extend this solution globally.

\end{abstract}
\maketitle

\maketitle
 \date{}

{\small {\bf Keywords:} Stochastic Keller-Segel system, Parabolic-parabolic chemotaxis system, Global existence,  Wiener process, Nonlinear noise }

{\small {\bf 2010 MSC:} 60H15; 35K55; 35R60; 92C17}

\section{Introduction}
 Chemotaxis is the directed movement of cells or organisms in response to chemical signals. A classical mathematical model for  this phenomenon is the Keller-Segel system,  originally introduced by Keller and Segel \cite{keller1970initiation, keller1971model} to describe the aggregation of biological species.  More precisely, this model describes organisms that, in addition to random diffusion, tend to move toward regions of higher concentration of nutrients or chemical substances secreted by the organisms themselves. 
A substantial amount of work has been devoted to the mathematical analysis of chemotaxis models, and considerable progress has been made in the study of global existence, finite-time blow-up, and long-time behavior in both bounded and unbounded domains. Among these models, the Keller-Segel system plays a central role and has been extensively studied over the past decades. As a representative example, the classical deterministic Keller-Segel system takes the form
\begin{eqnarray}\label{classical_K_S}
\begin{cases}	
	u_t  = \Delta u-  \nabla  \cdot (u\, \nabla v),&x\in\mathcal{O},t> 0,
    \\v_t=\Delta v-v+u,&x\in\mathcal{O},t> 0,\\\frac{\partial u}{\partial \nu} =\frac{\partial v}{\partial \nu} = 0,&x\in\partial\mathcal{O},t> 0,
    \\u (x,0) = u_{0}(x),\quad v (x,0) = v_{0}(x),&x\in\mathcal{O},
 \end{cases} 
\end{eqnarray}
where $\mathcal{O}\subset\mathbb{R}^d$ is a bounded domain with smooth boundary. In dimension $d=1$, Osaki and Yagi \cite{osaki2001finite} proved that all solutions of (\ref{classical_K_S}) are global in time and bounded. In dimension $d=2$,  the global existence of solutions of (\ref{classical_K_S}) is determined by the initial mass of $\int _\mathcal{O} u_0(x)dx$.  If $\int _\mathcal{O} u_0(x)dx<{4\pi}$, Nagai et al. \cite{nagai1997application} showed that the classical solution is global and bounded, whereas if $\int _\mathcal{O} u_0(x)dx>{4\pi}$ satisfying $\int _\mathcal{O} u_0(x)dx\notin \{4k\pi|k\in\mathbb{N}\}$, Horstmann and Wang \cite{horstmann2001blow} derived that the corresponding solution  blows up either in finite or infinite time. In  higher dimensions $d\ge 3$, Winkler \cite{winkler2010aggregation} showed that there exists $\varepsilon_0>0$, if  the  initial data satisfy $\|u_0\|_{L^p(\mathcal{O})}<\varepsilon_0$ with $p>\frac{d}{2}$ and  $\|\nabla v_0\|_{L^q(\mathcal{O})}<\varepsilon_0$ with $q>d$, the solution is global in time and bounded, and converges to the constant steady state determined by the total mass. Cao \cite{cao2015global} extended this result to the corresponding critical case, that is, $p=\frac{d}{2}$ and $q=d$. In addition,  for the parabolic-parabolic Keller–Segel system on the whole space $\mathbb{R}^2$, a similar mass threshold phenomenon is expected to determine whether solutions exist globally or blow up in finite time, with the conjectured critical mass being $8\pi $, see \cite{herrero1997blow,calvez2008parabolic,nagai2011brezis,mizoguchi2013global,corrias2006critical}. When the chemical substance diffuses sufficiently fast, the fully parabolic Keller–Segel system (\ref{classical_K_S}) is often approximated by the parabolic–elliptic chemotaxis model, where the chemical concentration satisfies an elliptic equation, for instance $-\Delta v=u- v$ and $-\Delta v=u$. The qualitative behavior of solutions, including global existence and finite-time blow-up, depends crucially on the properties of the initial data. We refer to \cite{blanchet2006two,nagai1997application,herrero1996chemotactic,blanchet2009critical,sugiyama2010blow} and the references therein for a more comprehensive account of related results.

 Keller-Segel type models for chemotaxis can arise naturally as mean-field limits of stochastic interacting particle systems in the large-population regime. Rigorous derivations of such limits can be found in \cite{stevens2000derivation,stevens2000stochastic}. However, in the deterministic Keller-Segel framework, the underlying particle system is typically driven only by mutually independent idiosyncratic noises, whose fluctuations vanish in the mean-field limit. Consequently, the corresponding macroscopic dynamics are deterministic; see \cite{jabin2017mean} for related results on propagation of chaos and mean-field limits.
 In more realistic settings, the particles may also be subject to common environmental fluctuations, such as changes in temperature, light, or sound. Since these effects persist in the mean-field limit, they are naturally reflected, at the macroscopic level, in stochastic Keller-Segel type models with noise. 
 For rigorous derivations in this direction, we refer to \cite{chen2025hegselmann, huang2021microscopic,nikolaev2025quantitative}. Motivated by this particle interpretation, stochastic Keller-Segel systems have attracted considerable attention in recent years. Existing studies have mainly focused on the well-posedness and qualitative behavior of solutions under various forms of stochastic forcing, corresponding to different choices of the noise coefficient \(\sigma(u)\). A representative model is given by  
\begin{equation}\label{classical_S_K}
\begin{cases}
du = \big(\Delta u-\nabla\cdot(u\nabla v)\big)\,dt+\sigma(u)\,dW(t),
& x\in\mathcal{O},\ t>0,\\
-\Delta v+v = u,
& x\in\mathcal{O},\ t>0.
\end{cases}
\end{equation}

A number of works have investigated systems of the form \eqref{classical_S_K} from different perspectives, with the spatial domain \(\mathcal{O}\) taken to be either the whole space \(\mathbb{R}^d\) or a bounded domain. For instance, 
 Huang and Qiu \cite{huang2021microscopic} studied the  system \eqref{classical_S_K} with divergence form noise in $\mathbb{R}^d\,(d=2,3)$  and proved the existence of a unique global weak solution provided that the norm of the initial data is sufficiently small in ${L^4(\mathbb{R}^d)}$, and further obtained strong solutions when the initial data possesses higher regularity. In $\mathbb{R}^2$, Misiats et al. \cite{misiats2022global}  showed that divergence form noise leads to global weak solutions for small initial mass, whereas non-divergence form noise may cause finite time blow-up with positive probability for any nonzero initial data. 
 In $\mathbb R^d$ ($d\ge2$), Tang and Wang \cite{tang2024strong} established local existence and uniqueness of strong solutions for a nonlocal aggregation-diffusion model   with multiplicative noise, and further showed that non-autonomous linear noise yields global existence and decay, whereas sufficiently strong nonlinear noise can prevent blow-up. In addition, in a bounded domain $\mathcal{O}\subset \mathbb{R}^2$,  Chen et al. \cite{chen2025well} established global well-posedness of  system \eqref{classical_S_K} with either a logistic source term or nonlinear noise, and proved the existence of a unique global nonnegative mild solution for any nonnegative initial data. 
 
 Compared with the stochastic parabolic-elliptic case,  stochastic parabolic-parabolic models have been much less investigated. To the best of our knowledge, one relevant contribution is due to Hausenblas et al. \cite{hausenblas2022one}, who constructed a stochastic version of the Schauder–Tychonoff fixed point theorem  and used it to study  a one-dimensional stochastic parabolic-parabolic Patlak-Keller-Segel system with Stratonovich noise acting in both the cell density equation and the chemical concentration equation. They established the existence of a martingale solution on any finite time interval.

Based on the above studies for deterministic and stochastic Keller-Segel systems,  we observe that  the stochastic parabolic-parabolic Keller-Segel model in which the random perturbation acts only on the cell density equation  has not been systematically investigated so far. Therefore, in this paper, we study the following stochastic parabolic-parabolic Keller–Segel system with a nonlocal nonlinear multiplicative noise:
\begin{equation}\label{equ1}
\begin{cases}
du = \big(\Delta u-\nabla\cdot(u\nabla v)\big)\,dt+\sigma(u)\,dW(t),
& x\in\mathcal{O},\ t>0,\\
dv = \big(\Delta v-v+u\big)\,dt,
& x\in\mathcal{O},\ t>0,\\
\dfrac{\partial u}{\partial \nu}
=
\dfrac{\partial v}{\partial \nu}
=0,
& x\in\partial\mathcal{O},\ t>0,\\
u(x,0)=u_0(x),\quad v(x,0)=v_0(x),
& x\in\mathcal{O},
\end{cases}
\end{equation}
where $\mathcal{O}\subset\mathbb{R}^2$ is a bounded domain with a  smooth boundary  and $\frac{\partial }{\partial \nu}$ denotes the derivative with respect to the outward unit normal vector $\nu$ of  $\partial\mathcal{O}$. 
 Here $u$ represents the bacterial density, $v$ represents the chemical substance concentration.  Furthermore, $\sigma(u)dW(t)$ represents the driving external random force, where $W(t)$ is cylindrical Wiener process.
 
Our study is further inspired by the nonlocal nonlinear noise structures introduced in the stochastic parabolic-elliptic Keller-Segel systems by     Tang and Wang \cite{tang2024strong} and  Chen et al. \cite{chen2025well}. More precisely, Tang and Wang considered a noise coefficient involving  a nonlocal factor of the form $\alpha(1+\|u\|_{H^r})^\rho u$  and Chen et al. studied multiplicative noise of the form $b_k \|u\|_{L^p}^ru$. Here, the constant $\alpha $ and $b_k$  specify the strength of the noise.  These works  indicate  that appropriately chosen nonlocal nonlinear noise may prevent finite-time blow-up and lead to global existence, in sharp contrast to the corresponding deterministic parabolic–elliptic model, where blow-up may occur for suitable initial data. 

A key feature of this type of noise lies in its nonlocal multiplicative structure. The intensity of the stochastic perturbation depends not only  on the local density $u(x)$, but also on the global $L^p$-size of the population. From the modeling point of view, this reflects the fact that environmental perturbations may be modulated by the overall population level or concentration.
From the analytical point of view,  such noise helps counteract the aggregation effect produced by the chemotactic drift, which is one of the key ingredients in the proof of global existence. 
 
Motivated by these observations, we investigate whether a similar nonlocal nonlinear noise can also prevent blow-up in the parabolic-parabolic setting. To this end, the present work is concerned with global existence and pathwise uniqueness for a stochastic parabolic-parabolic Keller-Segel system (\ref{equ1}) with a nonlocal nonlinear multiplicative noise coefficient of the form
$b_k\|u\|_{L^p(\mathcal O)}^r u$ in a two-dimensional bounded domain. It is worth emphasizing that the range of admissible parameters in the present work is different from that in \cite{chen2025well}.  The precise assumptions are stated in Assumption \ref{ass}. 

 In contrast to the deterministic parabolic-parabolic Keller-Segel system in a bounded  two-dimensional domain, for which the  global existence and blow-up  depend crucially on the critical initial mass $4\pi$, in the present work,  we establish the global existence and pathwise uniqueness of a solution of system \eqref{equ1} for arbitrary initial data in the present stochastic setting.  The main analytical difficulties arise from two aspects.  On the one hand, neither the chemotactic drift nor the nonlocal noise coefficient  satisfies the standard coercivity  or global Lipschitz conditions, so that standard existence theories for SPDEs are not directly applicable. On the other hand, the equation for the chemical concentration remains parabolic rather than elliptic, which makes the analysis require joint control of two fully coupled time-dependent unknowns.
 To overcome the above difficulties, we first introduce a truncated system \eqref{equ-u} and establish local existence by Banach's fixed point theorem. Pathwise uniqueness then allows us to construct a maximal local strong solution to the original system \eqref{equ1}. To prove global existence, we further introduce a suitable Lyapunov functional and derive uniform estimates, which eventually enable us to extend the maximal local solution globally in time.

We now briefly outline the main proof ideas, the novelties of our approach, and the organization of the paper.

In Section 2, we introduce the probabilistic framework, including some basic properties of the cylindrical Wiener processes, and state the assumptions on the noise coefficient $\sigma$. We also derive several auxiliary estimates that will be used throughout the paper and present the main global existence result.

In Section 3, we construct a global pathwise unique strong solution for arbitrary initial data. The proof is carried out in three steps.

\textbf{Step 1.}
We first establish the existence of local strong solutions to system \eqref{equ1}. Since neither the chemotactic drift nor the noise term satisfies the standard coercivity condition, and  since the regularity of $v$ has to be derived from its own time-dependent parabolic equation,  we consider the following
truncated system of (\ref{equ1}) under homogeneous Neumann boundary conditions
\begin{equation}\label{equ-u}
\begin{cases}
du_N=\Delta u_Ndt-\theta_N(\|u_N\|_{X_t}^2)\nabla\cdot\big(u_N\nabla v_N\big)dt+\theta_N(\|u_N\|_{X_t}^2)\sigma(u_N)dW(t),\\
d v_N=(\Delta v_N-v_N+u_N)dt,\\
u_N(0)=u_0, v_N(0)=v_0.
\end{cases}
\end{equation}
Here, for any $N>0$, we define $\theta_N(r)=\theta(r/N)$, where  $\theta:[0,\infty)\to[0,1]$ is a $C^2$ function such that $\theta(r)=1$ for $r\in [0,1]$, $\theta(r)=0$ for $r\ge2$ and $\sup_{r\in[0,\infty)}|\theta'(r)|\leq C_\theta<\infty$.
A distinctive feature of our truncation procedure is that it is performed with respect to the combined energy and gradient dissipation quantity. For every $t\ge 0$, we define 
\[
\|\rho \|_{X_t}^2:=\sup_{s\in[0,t]}\|\rho (s)\|_{L^2(\mathcal {O})}^2+\int_0^t\|\nabla \rho(s)\|_{L^2(\mathcal {O})}^2\,ds.
\]
For each fixed $N$, by the definition of $\theta_N(\cdot)$, if $\theta_N(\|\rho\|_{X_t}^2)\ne0$,  one has $\|\rho \|_{X_t}^2< 2N,$
which is essential for handling the fully parabolic coupling of system. Moreover, even after truncation, the system still does not satisfy the Lipschitz conditions required by standard existence theorems for SPDEs \cite [Theorem 4.2.4] {liu2015stochastic}. To overcome this difficulty, we freeze both the chemotactic term and the noise coefficient so that a contraction mapping can be established. As a result, for each $N\in \mathbb{N}$, the truncated system \eqref{equ-u} admits a global strong solution $u_N$.
Consequently, the original system \eqref{equ1} admits a local strong solution $u_N(\cdot\wedge\tau_N)$, where the stopping time $\tau_N$ is induced by the truncation.
  
  \textbf{Step 2.}
  On a fixed stochastic basis, we establish pathwise uniqueness of local strong solutions. This uniqueness property is then used to construct a maximal local solution to the original system \eqref{equ1}. The details are given in Subsection 3.2.

  \textbf{Step 3.}
   We show that the maximal local solution is in fact global in Subsection 3.3. The crucial point is to construct a suitable Lyapunov functional $\Phi(x)=(1+x)^\alpha,\,\alpha\in (\max \{0,\frac{2-r}{2}\},\frac12)$, which yields uniform estimates with respect to $N$. Our argument is motivated by the concavity-based Lyapunov approach employed in \cite{brzezniak2005stochastic,chen2025well,tang2024strong,tang2023noise}. However, the particular choice of \(\Phi\) used here is adapted to the structure of the present system and differs from the Lyapunov functions employed in those works.  By combining the specific structure of the nonlocal noise with the negativity of the second derivative of an appropriate concave function, we derive a negative trace term that counteracts the aggregation effect induced by chemotaxis. This leads to estimates for the local strong solutions that are uniform in $N$.
    Furthermore, an application of Markov's inequality shows that the corresponding stopping times tend to infinity almost surely, so that the maximal local solution extends globally in time.

 \section{Preliminaries and the main result}
 In this section, we introduce the probabilistic framework used throughout the paper. We first recall some notation and  basic properties of the cylindrical Wiener processes and the corresponding stochastic integral;  see \cite[Chapter 2]{liu2015stochastic} for details.  We then state the assumptions on the noise coefficient $\sigma$, derive several auxiliary estimates that will be used in the subsequent analysis, and present the definitions of local and maximal solutions. Finally, we formulate the main global existence result. 

 \subsection{Stochastic framework}  
   Let $(U,\langle\cdot,\cdot\rangle_U)$   be a separable Hilbert space with an orthonormal basis $(e_k)_{k\in\mathbb{N}}$.  The space of all bounded linear operators from $U$ to $L^2(\mathcal {O})$ is denoted by $L(U,L^2(\mathcal {O}))$. The space of Hilbert-Schmidt operators from $U$ to $L^2(\mathcal {O})$ is defined by 
 \begin{align*}
     L_2(U,L^2(\mathcal {O}))=\Big\{B\in L(U,L^2(\mathcal {O})):\sum _{k=1}^{\infty}\|Be_k\|_{L^2(\mathcal {O})}^2<\infty\Big\}
 \end{align*}
 and it is endowed with the norm $$\|B\|_{L_2(U,{L^2(\mathcal {O})})}^2=\sum _{k=1}^{\infty}\|Be_k\|_{L^2(\mathcal {O})}^2.$$ 
 
 Let   $(\Omega,\mathcal F,\{\mathcal F_t\}_{t\ge0},\mathbb P)$ be a probability space  with a complete, right-continuous filtration $\{\mathcal F_t\}_{t\ge0}$. Let $\{W_k(t)\}_{k\in\mathbb N}$  be a sequence of independent real-valued $\{\mathcal F_t\}_{t\ge 0}$-Brownian motions. The cylindrical Wiener process on $U$ is formally given by
	\[
	W(t):=\sum_{k=1}^\infty e_k\,W_k(t),\qquad t\in[0,T].
	\]
	We introduce the space of predictable square integrable processes
    \begin{align*}
        &\mathcal N_W^2(0,T;{L^2(\mathcal {O})})\\
	:=&\Big\{\Phi:\Omega\times[0,T]\to L_2(U,{L^2(\mathcal {O})})\ \Big|\ 
	\Phi \text{ is predictable and }
\mathbb E\int_0^T \|\Phi(s)\|_{L_2(U,L^2(\mathcal O))}^2\,ds<\infty
	\Big\}.
    \end{align*}
	For any $\Phi\in\mathcal N_W^2(0,T;{L^2(\mathcal {O})})$ and $t\in[0,T]$, the It\^o integral $\displaystyle\int_0^t \Phi(s)dW(s)$ is well defined and given by
	\[
	\int_0^t \Phi(s)dW(s)
	=\sum_{k=1}^\infty \int_0^t \Phi(s)(e_k)dW_k(s),
	\quad \mathbb P\text{-a.s.},
	\]
	where the series converges in $L^2(\Omega;C([0,T];{L^2(\mathcal {O})}))$. Moreover, the Burkholder-Davis-Gundy inequality yields (see, e.g., \cite[Lemma 3.1]{gawarecki2010stochastic}
	\begin{equation}\label{BDG}
		\mathbb E\Big(\sup_{t\in[0,T]}\Big\|\int_0^t \Phi(s)dW(s)\Big\|_{L^2(\mathcal {O})}^{2q}\Big)
		\le C_p\mathbb E\Big[\Big(\int_0^T \|\Phi(s)\|_{L_2(U,{L^2(\mathcal {O})})}^2\,ds\Big)^q\Big],\quad\forall q\ge1
	\end{equation}
	for some constant $C_q>0$ depending only on $q$. 
	
 \subsection{Assumptions on the noise coefficient} With the stochastic setting in place, we now specify the assumptions and  some elementary properties on the noise coefficient $\sigma$ that will be imposed throughout the paper.

\begin{assumption}\label{ass}
Assume that
\[
2< p<\infty,
\qquad
1<r<\frac{p}{p-2}.
\]
Let $\sigma:H^1(\mathcal O)\to L_2(U,L^2(\mathcal O))$
be defined by
\[
(\sigma(u)e_k)(x):=b_k\|u\|_{L^p(\mathcal O)}^{r}u(x),
\qquad u\in H^1(\mathcal O),\ k\in\mathbb N,
\]
where $(b_k)_{k\ge1}\in \ell^2(\mathbb R).$
\end{assumption}
\begin{remark}
Set $B_0^2:=\sum_{k=1}^\infty b_k^2<\infty.$
Then, for every $u\in H^1(\mathcal O)$,
\[
\|\sigma(u)\|_{L_2(U,L^2(\mathcal O))}^2
=
B_0^2\|u\|_{L^p(\mathcal O)}^{2r}\|u\|_{L^2(\mathcal O)}^2.
\]
Hence $\sigma$ is well defined as an $L_2(U,L^2(\mathcal O))$-valued mapping. Under Assumption \ref{ass}, whenever $\sigma(u)\in \mathcal N_W^2(0,T;L^2(\mathcal O))$,  it follows that 
\begin{align*}
    \int_0^t\sigma(u(s))dW(s)=\sum_{k=1}^\infty b_k\int_0^t\|u(s)\|_{L^p(\mathcal O)}^{r}u(s)dW_k(s).
\end{align*}
\end{remark}

\begin{lem}\label{lem_sigma}
Under Assumption \ref{ass}, there exists a constant $C=C(r,B_0)>0$ such that for all
$u,v\in H^1(\mathcal O)$,
\begin{align}\label{sigma_1}
    \|\sigma(u)-\sigma(v)\|_{L_2(U,L^2(\mathcal O))}^2
\le
C\Big(\|u\|_{L^p(\mathcal O)}^{2r}+\|v\|_{L^p(\mathcal O)}^{2r}\Big)
\Big(
\|u-v\|_{L^2(\mathcal O)}^2+\|u-v\|_{L^p(\mathcal O)}^2
\Big).
\end{align}
Moreover, for every $\varepsilon,\eta>0$, there exists a positive constant $C_{\varepsilon,\eta}(u,v)$
depending on $\|u\|_{L^2(\mathcal O)}$, $\|\nabla u\|_{L^2(\mathcal O)}$,
$\|v\|_{L^2(\mathcal O)}$ and $\|\nabla v\|_{L^2(\mathcal O)}$ such that
\begin{align}\label{sigma_2}
    \|\sigma(u)-\sigma(v)\|_{L_2(U,L^2(\mathcal O))}^2
\le
C_{\varepsilon,\eta }(u,v)\|u-v\|_{L^2(\mathcal O)}^2
+
\varepsilon\|\nabla(u-v)\|_{L^2(\mathcal O)}^2.
\end{align}
\end{lem}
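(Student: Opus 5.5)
We will prove \eqref{sigma_1} by a direct algebraic splitting and then deduce \eqref{sigma_2} from it via Gagliardo--Nirenberg and Young's inequalities.

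\textbf{Proof of \eqref{sigma_1}.} By the Remark following Assumption \ref{ass}, we have
\[
\|\sigma(u)-\sigma(v)\|_{L_2(U,L^2(\mathcal O))}^2=B_0^2\big\|\|u\|_{L^p}^r u-\|v\|_{L^p}^r v\big\|_{L^2}^2 .
\]
We write
\[
\|u\|_{L^p}^r u-\|v\|_{L^p}^r v=\|u\|_{L^p}^r(u-v)+\big(\|u\|_{L^p}^r-\|v\|_{L^p}^r\big)v .
\]
The first term is bounded in $L^2$ by $\|u\|_{L^p}^r\|u-v\|_{L^2}$. For the second term we use the mean value theorem for $s\mapsto s^r$, which is valid since $r>1$:
\[
\big|\|u\|_{L^p}^r-\|v\|_{L^p}^r\big|\le r\big(\|u\|_{L^p}+\|v\|_{L^p}\big)^{r-1}\|u-v\|_{L^p}.
\]
This produces the factor $(\|u\|_{L^p}+\|v\|_{L^p})^{r-1}\|v\|_{L^2}\|u-v\|_{L^p}$. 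Since $\mathcal O$ is bounded and $p>2$, we have $\|v\|_{L^2}\le|\mathcal O|^{1/2-1/p}\|v\|_{L^p}$. The prefactor is therefore bounded by $C(\|u\|_{L^p}+\|v\|_{L^p})^{r}$. Squaring and using $(a+b)^{2r}\le 2^{2r-1}(a^{2r}+b^{2r})$ gives \eqref{sigma_1}. Strictly, the constant also depends on $|\mathcal O|$ and $p$; we absorb this into $C$.

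\textbf{Proof of \eqref{sigma_2}.} Starting from \eqref{sigma_1}, we first note that $\|u\|_{L^p}^{2r}+\|v\|_{L^p}^{2r}$ is controlled by the $H^1$ norms of $u$ and $v$ via the Sobolev embedding $H^1(\mathcal O)\hookrightarrow L^p(\mathcal O)$, valid in two dimensions. This gives a constant $K(u,v)$ depending only on the listed quantities. It remains to bound $\|w\|_{L^p}^2$ for $w=u-v$ using the two-dimensional Gagliardo--Nirenberg inequality on a bounded smooth domain:
\[
\|w\|_{L^p}\le C\|w\|_{L^2}^{2/p}\|w\|_{H^1}^{1-2/p}\le C\|w\|_{L^2}+C\|w\|_{L^2}^{2/p}\|\nabla w\|_{L^2}^{1-2/p}.
\]
Then
\[
K\|w\|_{L^p}^2\le CK\|w\|_{L^2}^2+CK\|w\|_{L^2}^{4/p}\|\nabla w\|_{L^2}^{2(1-2/p)}.
\]
Young's inequality with exponents $p/2$ and $p/(p-2)$ bounds the last term by $\varepsilon\|\nabla w\|_{L^2}^2+C_\varepsilon K^{p/2}\|w\|_{L^2}^2$. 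This yields \eqref{sigma_2} with $C_{\varepsilon,\eta}(u,v)=C(K+C_\varepsilon K^{p/2})$. The auxiliary parameter $\eta$ plays no essential role; it can be carried along, for instance by splitting the Young step as $\varepsilon/2+\eta$-dependent terms, and the estimate holds for every $\eta>0$.

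\textbf{Main obstacle.} The only genuine technical point is the Gagliardo--Nirenberg interpolation on a bounded domain. There the lower-order $L^2$ term must be kept, because $w$ need not have zero mean. This is why the constant depends on $\|w\|_{L^2}$ via the full $H^1$ norm. Everything else is elementary.
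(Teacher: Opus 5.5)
Your proof of \eqref{sigma_1} is the paper's argument: the same splitting, the bound $|a^r-b^r|\le C_r(a^{r-1}+b^{r-1})|a-b|$, $\|v\|_{L^2}\le C\|v\|_{L^p}$, and Young. For \eqref{sigma_2}, each step you write is correct, and it proves the sentence as literally worded. It misses what the lemma is for, however, and two symptoms show it: you never use $r<\frac{p}{p-2}$, and you find $\eta$ superfluous. What the paper actually proves, see \eqref{const}, is that $C_{\varepsilon,\eta}(u,v)$ is a quantity depending only on powers of $\|u\|_{L^2}$ and $\|v\|_{L^2}$, plus $\eta(\|\nabla u\|_{L^2}^2+\|\nabla v\|_{L^2}^2)$ with $\eta$ arbitrarily small. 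This structure is exactly what is used later. In the contraction estimate one only knows $\|\xi\|_{X_t}^2,\|\bar\xi\|_{X_t}^2<2N$, and the argument needs $\int_0^t 1_{A_s}C_{\varepsilon,\eta}(\xi(s),\bar\xi(s))\,ds\le C(N,\varepsilon,\eta)\,t+4N\eta$ to be small. In the uniqueness proof one needs $\int_0^{T\wedge\tau_M}C_{\varepsilon,\eta}\,ds\le C(T,M)$ to feed the stochastic Gronwall lemma. Your constant $C(K+C_\varepsilon K^{p/2})$, with $K\sim\|u\|_{H^1}^{2r}+\|v\|_{H^1}^{2r}$, grows like $\|\nabla u\|_{L^2}^{rp}$ with $rp>2$. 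That is not integrable in time under $L^\infty_tL^2\cap L^2_tH^1$ control, so your version of the lemma cannot be used where the paper uses it.

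Sharpening the embedding does not fix this; the route through \eqref{sigma_1} itself is the problem. After Gagliardo--Nirenberg and Young on $\|u-v\|_{L^p}^2$, the prefactor $\|u\|_{L^p}^{2r}$ is raised to the power $\frac p2$. Even the sharp bound $\|u\|_{L^p}^{rp}\le C\|\nabla u\|_{L^2}^{r(p-2)}\|u\|_{L^2}^{2r}+C\|u\|_{L^2}^{rp}$ leaves the gradient power $r(p-2)$, which exceeds $2$ within Assumption \ref{ass} (e.g.\ $p=4$, $r=1.9$).

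The missing idea is to start from the finer bound \eqref{sigma_5}. There the full power $\|u\|_{L^p}^{2r}$ multiplies only $\|u-v\|_{L^2}^2$, while the $\|u-v\|_{L^p}^2$ term carries just $\|v\|_{L^2}^2(\|u\|_{L^p}^{2r-2}+\|v\|_{L^p}^{2r-2})$. Raising this to the power $\frac p2$ gives $\|v\|_{L^2}^{p}\|u\|_{L^p}^{(r-1)p}$, plus the analogous term in $v$. The condition $(r-1)p<2r$ is precisely $r<\frac{p}{p-2}$, so Young reduces this to $\|u\|_{L^p}^{2r}+\|v\|_{L^2}^{\frac{2rp}{2r-rp+p}}$. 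Then \eqref{G-N}, which holds because $r(1-\frac2p)<1$, turns $\|u\|_{L^p}^{2r}$ into $\eta\|\nabla u\|_{L^2}^2+C_\eta(\text{powers of }\|u\|_{L^2})$. This is where both the upper bound on $r$ and the parameter $\eta$ enter. Correspondingly, the ``main obstacle'' is not the lower-order term in Gagliardo--Nirenberg but this bookkeeping of gradient powers.
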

\begin{proof}
By definition of the Hilbert--Schmidt norm,
\begin{align}\label{sigma_3}
\nonumber&\|\sigma(u)-\sigma(v)\|_{L_2(U,L^2(\mathcal O))}^2
\\\nonumber=&
\sum_{k=1}^\infty \|(\sigma(u)-\sigma(v))e_k\|_{L^2(\mathcal O)}^2=
B_0^2
\big\|\|u\|_{L^p(\mathcal O)}^r u-\|v\|_{L^p(\mathcal O)}^r v\big\|_{L^2(\mathcal O)}^2\\
\le &2B_0^2\|u\|_{L^p}^{2r}\|u-v\|_{L^2}^2
+
2B_0^2\big|\|u\|_{L^p}^r-\|v\|_{L^p}^r\big|^2\|v\|_{L^2}^2.
\end{align}
Next, we introduce the following elementary inequalities, for any $a,b\ge0$ and $r\ge1$,
\begin{align}\label{elementary}
    \big|a^r-b^r\big|\le C_r\big(a^{r-1}+b^{r-1}\big)\big|a-b\big|.
\end{align}
Since $p>2$, we have $\|v\|_{L^2(\mathcal O)}\le C\|v\|_{L^p(\mathcal O)}$. Hence, using   (\ref{elementary}), we obtain 
\begin{align}\label{sigma_4}
    \|v\|_{L^2(\mathcal O)}^2
\big|\|u\|_{L^p(\mathcal O)}^r-\|v\|_{L^p(\mathcal O)}^r\big|^2
\le
C \|v\|_{L^2(\mathcal O)}^2\big(\|u\|_{L^p(\mathcal O)}^{2r-2}+\|v\|_{L^p(\mathcal O)}^{2r-2}\big)\|u-v\|_{L^p(\mathcal O)}^2.
\end{align}
Substituting (\ref{sigma_4}) into (\ref{sigma_3}), we obtain
\begin{align}\label{sigma_5}
    \nonumber&\|\sigma(u)-\sigma(v)\|_{L_2(U,L^2(\mathcal O))}^2
\\\le&
C\|u\|_{L^p(\mathcal O)}^{2r}\|u-v\|_{L^2(\mathcal O)}^2+C\Big(\|v\|_{L^2(\mathcal O)}^2\|u\|_{L^p(\mathcal O)}^{2r-2}+\|v\|_{L^2(\mathcal O)}^2\|v\|_{L^p(\mathcal O)}^{2r-2}\Big)
\|u-v\|_{L^p(\mathcal O)}^2.
\end{align}
Therefore, by Young's inequality for \eqref{sigma_5}, we obtain \eqref{sigma_1}.

Applying the Gagliardo-Nirenberg inequality yields
\[
\|u-v\|_{L^p(\mathcal O)}^2
\le
C\|\nabla(u-v)\|_{L^2(\mathcal O)}^{2(1-\frac2p)}
\|u-v\|_{L^2(\mathcal O)}^{\frac4p}+C\|u-v\|_{L^2(\mathcal O)}^2,\qquad2< p<\infty.
\]
Then, using the embedding  Young's inequality, for every $\varepsilon>0$, we have
\begin{align*}
  & C\Big(\|v\|_{L^2(\mathcal O)}^2\|u\|_{L^p(\mathcal O)}^{2r-2}+\|v\|_{L^2(\mathcal O)}^2\|v\|_{L^p(\mathcal O)}^{2r-2}\Big)
\|u-v\|_{L^p(\mathcal O)}^2\\
\le&
\varepsilon \|\nabla(u-v)\|_{L^2(\mathcal O)}^2
+
C_\varepsilon \Big(\|v\|_{L^2(\mathcal O)}^2\|u\|_{L^p(\mathcal O)}^{2r-2}+\|v\|_{L^2(\mathcal O)}^2\|v\|_{L^p(\mathcal O)}^{2r-2}\Big)^\frac{p}{2} \|u-v\|_{L^2(\mathcal O)}^2\\&+C\Big(\|v\|_{L^2(\mathcal O)}^2\|u\|_{L^p(\mathcal O)}^{2r-2}+\|v\|_{L^2(\mathcal O)}^2\|v\|_{L^p(\mathcal O)}^{2r-2}\Big)\|u-v\|_{L^2(\mathcal O)}^2. 
\end{align*}
Hence, combining $r<\frac{p}{p-2}$ and Young's inequality,  we infer 
\begin{align*}
&\|\sigma(u)-\sigma(v)\|_{L_2(U,L^2(\mathcal O))}^2\\
\le&
C_\varepsilon\Big(\|v\|_{L^2(\mathcal O)}^2\|u\|_{L^p(\mathcal O)}^{2r-2}+\|v\|_{L^2(\mathcal O)}^2\|v\|_{L^p(\mathcal O)}^{2r-2}\Big)^\frac{p}{2}
\|u-v\|_{L^2(\mathcal O)}^2\\
&
+\varepsilon \|\nabla(u-v)\|_{L^2(\mathcal O)}^2+C\Big(\|u\|_{L^p(\mathcal O)}^{2r}+\|v\|_{L^p(\mathcal O)}^{2r}\Big)
\|u-v\|_{L^2(\mathcal O)}^2
\\
\le & \varepsilon \|\nabla(u-v)\|_{L^2(\mathcal O)}^2+C_\varepsilon \Big(\|v\|_2^{\frac{2rp}{2r-rp+p}}+\|u\|_{L^p(\mathcal O)}^{2r}+\|v\|_{L^p(\mathcal O)}^{2r}\Big)
\|u-v\|_{L^2(\mathcal O)}^2.
\end{align*}
Applying the Gagliardo-Nirenberg inequality again to $\|u\|_{L^p(\mathcal O)}^{2r}$, we have 
\[
\|u\|_{L^p(\mathcal O)}^{2r}
\le
C\|\nabla u\|_{L^2(\mathcal O)}^{2r(1-\frac2p)}
\|u\|_{L^2(\mathcal O)}^{\frac{4r}{p}}+C\|u\|_{L^2(\mathcal{O})}^{2r}.
\]
Since $r<\frac{p}{p-2}$, we know $r\left(1-\frac{2}{p}\right)<1$. Hence,  by Young's inequality, for every $\eta>0$
\begin{align}\label{G-N}
    \|u\|_{L^p(\mathcal O)}^{2r}
\le
\eta\|\nabla u\|_{L^2(\mathcal O)}^2
+
C_\eta
\|u\|_{L^2(\mathcal O)}^{\frac{4r}{p-r(p-2)}}
+
C\|u\|_{L^2(\mathcal O)}^{2r}.
\end{align}
The same estimate holds for $\|v\|_{L^p(\mathcal O)}^{2r}$. Then, there exists a positive constant
\begin{align}\label{const}
\nonumber C_{\varepsilon,\eta }(u,v)
:=&
C_{\varepsilon,\eta }\big(
\|u\|_{L^2(\mathcal O)}^{\frac{4r}{p-r(p-2)}}
+
\|u\|_{L^2(\mathcal O)}^{2r}+\|v\|_{L^2(\mathcal O)}^{\frac{4r}{p-r(p-2)}}
+
\|v\|_{L^2(\mathcal O)}^{2r}+\|v\|_2^{\frac{2rp}{2r-rp+p}}\big)  \\&+\eta\|\nabla u\|_{L^2(\mathcal O)}^2+
\eta\|\nabla v\|_{L^2(\mathcal O)}^2,
\end{align}
such that
\[
\|\sigma(u)-\sigma(v)\|_{L_2(U,L^2(\mathcal O))}^2
\le
C_{\varepsilon,\eta }(u,v)\|u-v\|_{L^2(\mathcal O)}^2
+
\varepsilon
\|\nabla(u-v)\|_{L^2(\mathcal O)}^2.
\]
This completes the proof.
\end{proof}

\subsection{The initial-boundary value problem} We next establish a few basic estimates for the solution to the following inhomogeneous linear heat equation, which will be used repeatedly in the later analysis
\begin{equation}\label{equ-v_t}
\begin{cases}
v_t=\Delta v-v+g, & x\in\mathcal {O},\ t>0, \\\frac{\partial v}{\partial \nu} = 0,&x\in\partial\mathcal{O},t> 0,\\
v(x,0)=v_0(x), & x\in\mathcal {O}.
\end{cases}
\end{equation}
\begin{lem}\label{lem_v}
	Let $\mathcal O\subset\mathbb R^2$ be a bounded smooth domain and let $v$ be the mild solution of \eqref{equ-v_t}.
    	\begin{itemize}
		\item[(i)] Assume that $v_0\in W^{1,\infty}(\mathcal O)$ and
		$g\in L^2(0,T;H^1(\mathcal O))$. Then there exists a constant $C>0$, independent of $T$, such that for all $t\in[0,T]$,
\begin{equation}\label{v_infty}
			\int_0^t \|\nabla v(s)\|_{L^\infty(\mathcal O)}^2\,ds
			\le
			C\left(
			t\|\nabla v_0\|_{L^\infty(\mathcal O)}^2
			+(t+t^2)
			\int_0^t \|g(s)\|_{H^1(\mathcal O)}^2\,ds
			\right).
		\end{equation}
		\item[(ii)] Assume that
		\[
		1\le m\le q\le \infty,\qquad 1\le \gamma<\infty,
		\qquad \frac1q>\frac1m-\frac12,
		\]
		and suppose that $v_0\in W^{1,q}(\mathcal O), \,g\in L^\gamma(0,T;L^m(\mathcal O)).$
		Then, for all $t\in[0,T]$, there exists a positive constant $C$,
		independent of $T$, such that
	\begin{equation}\label{v_r,q}
			\int_0^t \|\nabla v(s)\|_{L^q(\mathcal O)}^\gamma \,ds
			\le
			C\left(
			t\|\nabla v_0\|_{L^q(\mathcal O)}^\gamma
			+
			\int_0^t \|g(s)\|_{L^m(\mathcal O)}^\gamma \,ds
			\right).
		\end{equation}
	\end{itemize}
\end{lem}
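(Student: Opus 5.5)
We work with the variation-of-constants formula
\[
v(t)=e^{t(\Delta-1)}v_0+\int_0^t e^{(t-s)(\Delta-1)}g(s)\,ds ,
\]
where $(e^{t\Delta})_{t\ge0}$ is the Neumann heat semigroup on $\mathcal O$. We take the gradient and estimate the two terms separately. The tool is the standard family of Neumann heat semigroup estimates in a bounded smooth domain of $\mathbb R^2$ (e.g.\ Winkler's lemma). For $1\le m\le q\le\infty$ and all $t>0$,
\begin{equation*}
\|\nabla e^{t\Delta}w\|_{L^q}\le C\big(1+t^{-\frac12-(\frac1m-\frac1q)}\big)e^{-\lambda_1 t}\|w\|_{L^m},
\qquad
\|\nabla e^{t\Delta}w\|_{L^q}\le C e^{-\lambda_1 t}\|\nabla w\|_{L^q}.
\end{equation*}
The second estimate holds for $w\in W^{1,q}$, with $q=\infty$ allowed. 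Because of the extra factor $e^{-t}$ in $e^{t(\Delta-1)}$, we may use the bound $C(1+\tau^{-a})e^{-\tau}$, which is integrable on $(0,\infty)$ whenever $a<1$. This uniform integrability is what makes the constants independent of $T$.

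\textbf{Part (ii).} The homogeneous part satisfies $\|\nabla e^{t(\Delta-1)}v_0\|_{L^q}\le C\|\nabla v_0\|_{L^q}$. Raising to the power $\gamma$ and integrating over $(0,t)$ produces the term $Ct\|\nabla v_0\|_{L^q}^\gamma$. For the Duhamel part we set $a=\frac12+\frac1m-\frac1q$; the hypothesis $\frac1q>\frac1m-\frac12$ is exactly $a<1$. Then
\[
\|\nabla(v-e^{t(\Delta-1)}v_0)(t)\|_{L^q}\le \int_0^t k(t-s)\|g(s)\|_{L^m}\,ds,\qquad k(\tau)=C(1+\tau^{-a})e^{-\tau}\in L^1(0,\infty).
\]
Young's convolution inequality in time, applied as $L^1*L^\gamma\to L^\gamma$ on $(0,t)$ (extending $g$ by zero), gives $\int_0^t\|\cdot\|^\gamma\le \|k\|_{L^1}^\gamma\int_0^t\|g\|_{L^m}^\gamma$. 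Adding the two pieces with $(x+y)^\gamma\le 2^{\gamma-1}(x^\gamma+y^\gamma)$ yields \eqref{v_r,q}.

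\textbf{Part (i).} Here $q=\infty$ and $g\in H^1$. The homogeneous part is handled as before and gives $Ct\|\nabla v_0\|_{L^\infty}^2$. For the inhomogeneous part we cannot take $m=2$, $q=\infty$ in the $L^m\to L^q$ estimate, because then $a=1$ and the kernel is not integrable. Instead we use the $H^1$ regularity of $g$. By the Sobolev embedding $H^1(\mathcal O)\hookrightarrow L^{m}(\mathcal O)$ for every finite $m$, pick any $m>2$, say $m=4$. Then $a=\frac12+\frac14<1$ and
\[
\|\nabla(e^{(t-s)(\Delta-1)}g(s))\|_{L^\infty}\le C(1+(t-s)^{-3/4})e^{-(t-s)}\|g(s)\|_{H^1}.
\]
Applying the Young convolution argument with $\gamma=2$ gives $C\int_0^t\|g\|_{H^1}^2$. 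This bound is already of the required form, since the constant factor is dominated by $(t+t^2)$ only when $t\ge1$. The stated factor $(t+t^2)$ suggests the authors used Cauchy--Schwarz instead, namely $\big(\int_0^t\|g\|_{H^1}\big)^2\le t\int_0^t\|g\|_{H^1}^2$, together with a bound of $\nabla e^{\tau\Delta}$ from $H^1$ to $L^\infty$ of the form $C(1+\tau^{-1/2})$. Either route can be arranged to produce \eqref{v_infty}. For small $t$ we will make sure the bound is stated in a way compatible with the factor $(t+t^2)$. If the convolution bound does not fit that factor directly, we fall back to the Cauchy--Schwarz route, because the $\tau^{-1/2}$-type singularity is square integrable up to a logarithm only after the $m>2$ improvement is used.

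The main obstacle is this borderline case in (i). The direct $L^2\to W^{1,\infty}$ semigroup bound has the non-integrable singularity $\tau^{-1}$, so the proof must use the gain from $H^1\hookrightarrow L^m$ with $m>2$, or equivalently a smoothing estimate involving $\nabla g$. After that, the remaining work is to check that the constants are uniform in $T$ via the $e^{-\tau}$ damping, and to reconcile the result with the particular factor $(t+t^2)$.
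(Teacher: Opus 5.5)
Part (ii) of your proposal follows the paper's argument and is fine: the mild formula, $W^{1,q}$-boundedness of the semigroup for the $v_0$-term, the $L^m\to W^{1,q}$ smoothing bound with exponent $a=\frac12+\frac1m-\frac1q<1$, and Young's convolution inequality with the $e^{-\tau}$-damped kernel.

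Part (i), however, is not proved. The right-hand side of \eqref{v_infty} carries the factor $t+t^2$, which vanishes as $t\to0$. This smallness is exactly what is later used to make $\mathcal T_N$ a contraction for small $T^*$. Your main route yields only $C\int_0^t\|g(s)\|_{H^1(\mathcal O)}^2\,ds$. You note yourself that this is dominated by the required bound only for $t\ge1$, and you leave $t<1$ as a promise.

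More careful bookkeeping cannot rescue this route. Passing through $H^1(\mathcal O)\hookrightarrow L^m(\mathcal O)$ with finite $m$ gives the kernel $(1+\tau^{-1/2-1/m})e^{-\tau}$. Its $L^1(0,t)$-norm is of order $t^{1/2-1/m}$. So even with the truncated norm in Young's inequality you get a factor $t^{1-2/m}$, which is not $O(t)$ as $t\to0$ for any finite $m$. By scaling of the Volterra operator with kernel $\tau^{-1/2-1/m}$ on $L^2(0,t)$, no other use of that kernel bound does better.

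The missing ingredient is the sharp gradient smoothing estimate
\[
\|\nabla e^{\tau\Delta}g\|_{L^\infty(\mathcal O)}\le C\bigl(1+\tau^{-1/2}\bigr)\|g\|_{H^1(\mathcal O)} .
\]
It uses $\nabla g\in L^2$ at the scaling-critical rate $\tau^{-1/2}$ in two dimensions, instead of losing $\tau^{-1/m}$ through a Sobolev embedding. This is the estimate \eqref{gad_infty} that the paper relies on.

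With $k(\tau)=e^{-\tau}(1+\tau^{-1/2})$ one has $\|k\|_{L^1(0,t)}\le t+2t^{1/2}$. Young's inequality then gives $\|\nabla v\|_{L^2(0,t;L^\infty(\mathcal O))}\le Ct^{1/2}\|\nabla v_0\|_{L^\infty(\mathcal O)}+C(t+t^{1/2})\|g\|_{L^2(0,t;H^1(\mathcal O))}$, and squaring gives exactly \eqref{v_infty}. The key is to use the $L^1(0,t)$-norm of the kernel, not its $L^1(0,\infty)$-norm.

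You mention this route as a fallback but never carry it out, and your justification for it is backwards:
\begin{itemize}
\item The kernel need not be square integrable, only integrable, since the relevant Young inequality is $L^1*L^2\to L^2$. In fact $\tau^{-1/2}$ is not square integrable near $0$.
\item The bound $(\int_0^t\|g\|_{H^1(\mathcal O)})^2\le t\int_0^t\|g\|_{H^1(\mathcal O)}^2$ handles only the bounded part of the kernel, which produces the $t^2$ term. The $t$ term comes from $\int_0^t\tau^{-1/2}\,d\tau=2t^{1/2}$.
\item Going through $L^m$ with $m>2$ worsens the singularity to $\tau^{-1/2-1/m}$ rather than improving it.
\end{itemize}
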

\begin{proof}
	 Let $e^{t\Delta}$ be the Neumann heat semigroup on $\mathcal O$.
	By the mild representation formula,
	 we obtain
	\[
	\nabla v(t)
	=
	e^{-t}\nabla e^{t\Delta}v_0
	+
	\int_0^t e^{-(t-s)}\nabla e^{(t-s)\Delta}g(s)\,ds.
	\]
	Hence, we have 
	\begin{equation}\label{grad-v-infty}
		\|\nabla v(t)\|_{L^q(\mathcal O)}
		\le
		e^{-t}\|\nabla e^{t\Delta}v_0\|_{L^q(\mathcal O)}
		+
		\int_0^t e^{-(t-s)}
		\|\nabla e^{(t-s)\Delta}g(s)\|_{L^q(\mathcal O)}\,ds.
	\end{equation}
    By the standard boundedness of the Neumann heat semigroup on $W^{1,q}(\mathcal O)$, we have
	\begin{align}\label{v_0_q}
	    \|\nabla e^{t\Delta}v_0\|_{L^q(\mathcal O)}
	\le
	C\|\nabla v_0\|_{L^q(\mathcal O)},
	\qquad t>0.
	\end{align}
    Moreover, by the standard  estimate \cite [Lemma 1.3]{winkler2010aggregation} for the Neumann heat semigroup,
    \begin{align}
        \label{gad_infty}&\|\nabla e^{(t-s)\Delta}g(s)\|_{L^\infty(\mathcal O)}
	\le
	C\Bigl(1+(t-s)^{-1/2}\Bigr)\|\nabla g(s)\|_{L^2(\mathcal O)};\\
    \label{gad_q}&\|\nabla e^{(t-s)\Delta}g(s)\|_{L^q(\mathcal O)}
	\le
	C\Bigl(1+(t-s)^{-\alpha}\Bigr)\|g(s)\|_{L^m(\mathcal O)},
    \end{align}
	where
	\[
	\alpha:=\frac12+\frac d2\left(\frac1m-\frac1q\right)
	=
	\frac12+\frac1m-\frac1q
	\qquad \text{since } d=2.
	\]
    
	We first prove (i). Taking $q=\infty$ in \eqref{grad-v-infty} and \eqref{v_0_q}.
	Substituting  estimates (\ref{v_0_q}) and (\ref{gad_infty})  into \eqref{grad-v-infty}, we infer that
	\[
	\|\nabla v(t)\|_{L^\infty(\mathcal O)}
	\le
	C\|\nabla v_0\|_{L^\infty(\mathcal O)}
	+
	C\int_0^t e^{-(t-s)}
	\big(1+(t-s)^{-1/2}\big)\|\nabla g(s)\|_{L^2(\mathcal O)}\,ds.
	\]
	Applying Young's convolution inequality in time yields
	\[
	\|\nabla v\|_{L^2(0,t;L^\infty(\mathcal O))}
	\le
	Ct^{1/2}\|\nabla v_0\|_{L^\infty(\mathcal O)}
	+
C(t+t^{1/2})\|g\|_{L^2(0,t;H^1(\mathcal O))}.
	\]
	Squaring both sides, we obtain
	\[
	\int_0^t \|\nabla v(s)\|_{L^\infty(\mathcal O)}^2\,ds
	\le
	C\left(
	t\|\nabla v_0\|_{L^\infty(\mathcal O)}^2
	+
	(t+t^2)\int_0^t \|g(s)\|_{H^1(\mathcal O)}^2\,ds
	\right),
	\]
	which proves \eqref{v_infty}.
    
		We now prove (ii). Similarly, 
	substituting \eqref{v_0_q} and \eqref{gad_q}  into \eqref{grad-v-infty}, we infer that
	\begin{align*}
	    \|\nabla v(t)\|_{L^q(\mathcal O)}
	\le
	C\|\nabla v_0\|_{L^q(\mathcal O)}
	+
	C\int_0^t e^{-(t-s)}
	\big(1+(t-s)^{-\alpha}\big)\|g(s)\|_{L^m(\mathcal O)}\,ds.
	\end{align*}
	Since $\frac1q>\frac1m-\frac12$,
	we have $\alpha<1$, and hence,  by Young's convolution inequality in time, it follows that
	\[
	\|\nabla v\|_{L^\gamma(0,t;L^q(\mathcal O))}
	\le
	Ct^\frac{1}{\gamma}\|\nabla v_0\|_{L^q(\mathcal O)}
	+
	C\|g\|_{L^\gamma(0,t;L^m(\mathcal O))},
	\]
	which proves \eqref{v_r,q}.
\end{proof}
\subsection{The main theorem }
We now state the main result of this paper. To this end, we first introduce the notions of local and maximal strong solutions to  \eqref{equ1}. In the present paper, the strong solution is understood in the probabilistic sense, that is, the solution is constructed on a given stochastic basis.

  \begin{defn}(Local strong solution)\label{def_local} Fix a stochastic basis
$(\Omega,\mathcal F,\{\mathcal F_t\}_{t\ge0},\mathbb P)$
with a complete and right-continuous filtration, and let $W$ be an $\{\mathcal F_t\}_{t\ge0}$-adapted cylindrical Wiener
process on $U$. Let $u_0$ be an $L^2(\mathcal{O})$-valued  $\mathcal{F}_0$-measurable random variable. A local strong solution to  the equation (\ref{equ1}) is a pair $(u,\tau)$, where $\tau$ is an $\{\mathcal F_t\}$-stopping time  and $u$ is an $L^2(\mathcal O)$-valued continuous adapted process such that for every $T>0$,
	\[
	u(\cdot\wedge\tau)\in L^2(\Omega;C([0,T];L^2(\mathcal O)))
	\cap L^2(\Omega; L^2(0,T;H^1(\mathcal O))).
	\]
    For such a pair $(u,\tau)$, let $v$ be the mild solution of the second
equation in \eqref{equ1} on $[0,\tau]$ associated with $u$ and
$v_0\in W^{1,\infty}(\mathcal O)$ and  for every $T>0$,
\[
v(\cdot\wedge\tau)
\in L^2(0,T;W^{1,\infty}(\mathcal O)),
\qquad \mathbb P\text{-a.s.}
\]
  	Then, for every $\phi\in H^1(\mathcal O)$, the following identity holds
for all $t\in[0,T]$, $\mathbb P$-a.s.
  \begin{align*}
      \big(u(t\wedge{\tau}),\phi \big)_{L^2(\mathcal{O})}=&\big(u_0,\phi \big)_{L^2(\mathcal{O})}-\int_0^{t\wedge{\tau}}\big(\nabla u(s)-u(s)\nabla v(s),\nabla\phi\big)_{L^2(\mathcal{O})} \,ds\\&+\Big(\int_0^{t\wedge{\tau}}\sigma(u(s))dW(s),\phi\Big)_{L^2(\mathcal{O})}.
  \end{align*}

 \end{defn}
   \begin{defn}(Maximal solution)\label{def_max}
A maximal solution to \eqref{equ1} is a pair $(u,\tau^*)$ for which there exists an increasing sequence of stopping times $\{\tau_N\}_{N\in\mathbb N}$ such that $\tau_N\uparrow \tau^*
,\,\mathbb P\text{-a.s.},$
and for each $N\in\mathbb N$, $(u,\tau_N)$ is a local strong solution in the sense of Definition~\ref{def_local}. In addition,
	\[
	\limsup_{t\uparrow\tau^*}
	\Big(
	\sup_{0\le s\le t}
	\|u(s)\|_{L^2(\mathcal O)}^2
	+
	\int_0^t
	\|\nabla u(s)\|_{L^2(\mathcal O)}^2\,ds
	\Big)
	=\infty
	\quad\text{on }\{\tau^*<\infty\},
	\qquad\mathbb P\text{-a.s.}
	\]
If $\tau^*=\infty$ almost surely, then $(u,\tau^*)$ is called a global solution.
\end{defn}
 \begin{thm}\label{thm1}(Global existence and pathwise uniqueness) 
     Under Assumption \ref{ass},  for any $u_0\in L^2(\mathcal {O})$ and  $v_0\in W^{1,\infty}(\mathcal {O}) $,  there exists a global pathwise unique strong solution $u$ of system (\ref{equ1}) in the sense  of Definition \ref{def_max}. 
\end{thm}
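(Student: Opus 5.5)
The plan has three stages: local existence for the truncated system \eqref{equ-u}, pathwise uniqueness leading to a maximal solution, and a Lyapunov estimate showing the maximal solution is global.

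\textbf{Stage 1: the truncated system.} Fix $N$. For a given adapted $\bar u$ with $\|\bar u\|_{X_T}^2$ finite, let $\bar v$ be the mild solution of $v_t=\Delta v-v+\bar u$. We then solve the linear SPDE
\[
du=\Delta u\,dt-\theta_N(\|\bar u\|_{X_t}^2)\nabla\cdot(\bar u\nabla\bar v)\,dt+\theta_N(\|\bar u\|_{X_t}^2)\sigma(\bar u)\,dW,
\]
in which the drift and the noise are frozen. This defines a map $\mathcal T:\bar u\mapsto u$ on the space $L^2(\Omega;C([0,T_0];L^2))\cap L^2(\Omega;L^2(0,T_0;H^1))$, with norm $\mathbb E\|\cdot\|_{X_{T_0}}^2$.

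To show $\mathcal T$ is a contraction, apply It\^o's formula to $\|u_1-u_2\|_{L^2}^2$ and use \eqref{BDG}. The difference of the chemotactic terms splits into two parts:
\begin{itemize}
\item $\bar u_1\nabla(\bar v_1-\bar v_2)$, bounded through Lemma \ref{lem_v}(ii) with $g=\bar u_1-\bar u_2$;
\item $(\bar u_1-\bar u_2)\nabla\bar v_1$, bounded through Lemma \ref{lem_v}(i), giving $\nabla\bar v_1\in L^2(0,t;L^\infty)$ with bound $C(N,v_0)$.
\end{itemize}
The noise difference is handled by \eqref{sigma_1}, combined with Gagliardo--Nirenberg and the fact that $\|\bar u_i\|_{X_t}^2<2N$ on the support of $\theta_N$. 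The difference of the cutoffs is controlled via $|\theta_N(a)-\theta_N(b)|\le (C_\theta/N)|a-b|$, using the stopping-time trick: work up to the first time either $X_t$-norm exceeds $2N$. Together these give
\[
\mathbb E\|\mathcal T\bar u_1-\mathcal T\bar u_2\|_{X_{T_0}}^2\le C_N\,(T_0^{\beta}+T_0)\,\mathbb E\|\bar u_1-\bar u_2\|_{X_{T_0}}^2
\]
for some $\beta>0$. For $T_0$ small, depending only on $N$, $\mathcal T$ is a contraction. Since $T_0$ depends only on $N$, we iterate on $[kT_0,(k+1)T_0]$ to obtain a global $u_N$. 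Setting $\tau_N=\inf\{t:\|u_N\|_{X_t}^2\ge N\}$, the pair $(u_N,\tau_N)$ is a local strong solution of \eqref{equ1}.

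\textbf{Stage 2: uniqueness and the maximal solution.} For two local solutions, apply It\^o's formula to $e^{-\int_0^t\rho(s)ds}\|u_1-u_2\|^2$, with the weight
\[
\rho=C\big(\|\nabla v_1\|_{L^\infty}^2+C_{\varepsilon,\eta}(u_1,u_2)+\cdots\big).
\]
The noise difference is absorbed using \eqref{sigma_2}, and the term $u_2\nabla(v_1-v_2)$ through Lemma \ref{lem_v}(ii). This weight is integrable up to the stopping times, and a Gronwall argument gives $u_1=u_2$ up to $\tau_1\wedge\tau_2$. Uniqueness makes the family $u_N$ consistent, so we define $u=u_N$ on $[0,\tau_N]$ and $\tau^*=\lim\tau_N$. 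The blow-up criterion in Definition \ref{def_max} then follows from the definition of $\tau_N$.

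\textbf{Stage 3: global existence (the main obstacle).} We must show $\mathbb P(\tau_N<T)\to0$ for every $T$. Apply It\^o's formula to $\Phi(\|u\|_{L^2}^2)$ with $\Phi(x)=(1+x)^\alpha$ and $\alpha\in(\max\{0,\tfrac{2-r}{2}\},\tfrac12)$. The drift contributes three terms:
\begin{itemize}
\item the dissipation $-2\Phi'\|\nabla u\|^2$;
\item the chemotaxis term $2\Phi'(u\nabla v,\nabla u)$;
\item the It\^o correction $\Phi'\,\|\sigma\|^2+2\Phi''\sum_k(\sigma(u)e_k,u)^2$, which equals
\[
B_0^2\|u\|_{L^p}^{2r}\|u\|^2\big[\Phi'+2\Phi''\|u\|^2\big].
\]
\end{itemize}
Since $\alpha<\tfrac12$, the bracket $\Phi'+2\Phi''\|u\|^2$ is at most $-c(1+x)^{\alpha-1}$ for large $x=\|u\|^2$. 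This produces a negative term of size roughly $\|u\|_{L^p}^{2r}(1+x)^{\alpha-1}x$.

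The chemotaxis term is the hard part, because $\nabla v$ is not slaved to $u$ elliptically. Our first attempt is
\[
|(u\nabla v,\nabla u)|\le\tfrac12\|\nabla u\|^2+C\|u\|_{L^p}^2\|\nabla v\|_{L^{2p/(p-2)}}^2.
\]
We would then estimate $\|\nabla v\|$ through Lemma \ref{lem_v}(ii), with $m$ chosen so that $\|u\|_{L^m}$ is dominated by interpolation between the $L^1$-type mass and $\|u\|_{L^p}$. A time-integrated Young inequality should then let the negative noise term, with $r>1$ and $\alpha>\tfrac{2-r}{2}$, absorb this contribution. We expect the resulting exponent bookkeeping to be the main obstacle. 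The martingale term is handled by BDG together with $\Phi'^2x\,\|\sigma\|^2\lesssim$ the negative term. This yields
\[
\mathbb E\sup_{t\le T\wedge\tau_N}(1+\|u\|^2)^\alpha+\mathbb E\int_0^{T\wedge\tau_N}\Phi'\|\nabla u\|^2\le C_T.
\]
Finally we upgrade to a bound on $\|u\|_{X_{T\wedge\tau_N}}$ in probability, for instance via a log or Markov argument on the set where $\sup\|u\|^2\le R$. Markov's inequality then gives $\mathbb P(\tau_N<T)\le C_T/\Phi(N)\to0$, hence $\tau^*=\infty$ almost surely.
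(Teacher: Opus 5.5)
Your architecture matches the paper's: a frozen-coefficient contraction for the truncated system, then pathwise uniqueness and a maximal solution, then the Lyapunov functional $(1+x)^\alpha$ with $\alpha<\frac12$ followed by Markov's inequality. Using an exponential weight in Stage 2 instead of the stochastic Gronwall lemma is a workable variant.

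The genuine gap is the one estimate that actually gives global existence. In Stage 3 you defer the absorption of the chemotactic term to ``exponent bookkeeping'', and the route you sketch does not close.
\begin{itemize}
\item \emph{No $L^1$ bound.} There is nothing to interpolate with: $u_0\in L^2(\mathcal O)$ may change sign, and positivity and mass conservation are not established. Even for nonnegative solutions, $\int_{\mathcal O}u$ is only a supermartingale, not a deterministic bound.
\item \emph{Wrong pairing.} Pairing the full $\|u\|_{L^p}^2$ with $\|\nabla v\|_{L^{2p/(p-2)}}^2$ puts the large exponent on the wrong factor. Write $x=\|u\|_{L^2}^2$. The coercive terms $\|\nabla u\|_{L^2}^2(1+x)^{\alpha-1}$ and $\|u\|_{L^p}^{2r}x^2(1+x)^{\alpha-2}$ control $\|u\|_{L^p}^{2a}$ only for $a\le\frac{p}{p-2}$. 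Young then forces an exponent $2a'\ge p$ on $\|\nabla v\|$.
\item \emph{Consequence.} Lemma~\ref{lem_v}(ii) leaves you with $\int\|u\|_{L^{2}}^{2a'}\,ds$. On near-constant profiles there is no gradient dissipation, so this must be beaten by $\|u\|_{L^2}^{2(r+\alpha)}$. That is impossible for large $p$ (e.g.\ $p\ge5$), since $2(r+\alpha)<\frac{2p}{p-2}+1$.
\end{itemize}

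The missing idea, which is the heart of the paper's proof, is to put the \emph{small} exponent on $\nabla v$:
\begin{itemize}
\item Take $2<m\le\frac{(r+2)p}{p+r}$ and bound $2\alpha\int_{\mathcal O}u\nabla u\cdot\nabla v\,dx\le\varepsilon\|\nabla u\|_{L^2}^2+C\|u\|_{L^m}^2\|\nabla v\|_{L^{2m/(m-2)}}^2$.
\item Apply Young with exponents $\frac{2-\alpha}{1-\alpha}$ and $2-\alpha$. The $u$-part then carries the weight $(1+x)^{\alpha-2}$ of the good term, and $\nabla v$ only gets the power $2(2-\alpha)$.
\item The $u$-part is absorbed by interpolating $L^m$ between $L^2$ and $L^p$ with $\theta=\frac{r}{r+2}$. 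This needs $\alpha<\frac{r}{r+1}$.
\item The $v$-part goes through Lemma~\ref{lem_v}(ii) with the same $m$, giving $\int\|u\|_{L^m}^{2(2-\alpha)}\,ds$. For $x>1$ this is absorbed by interpolation with $\theta=\frac{r}{r+\alpha}$, which is exactly where $\alpha>\frac{2-r}{2}$ is used.
\end{itemize}

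Three further repairs are needed.
\begin{itemize}
\item \emph{Stage 1 noise difference.} \eqref{sigma_1} is too lossy. For $\bar u_1$ and $w=\bar u_1-\bar u_2$ merely bounded in $X_T$, the product $\|\bar u_1\|_{L^p}^{2r}\|w\|_{L^p}^2$ is time-integrable only if $(r+1)(p-2)\le p$. That excludes every admissible $r$ once $p\ge4$. Use \eqref{sigma_2} instead, or equivalently the unsimplified bound in which $\|w\|_{L^p}^2$ multiplies only $\|\bar u_2\|_{L^2}^2\big(\|\bar u_1\|_{L^p}^{2r-2}+\|\bar u_2\|_{L^p}^{2r-2}\big)$. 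This closes precisely because $r<\frac{p}{p-2}$.
\item \emph{Small $x$ in Stage 3.} The bracket $\Phi'+2\Phi''x$ is positive for $x<\frac{1}{1-2\alpha}$. You therefore also need a Gagliardo--Nirenberg bound of the form $\|u\|_{L^p}^{2r}x\le\varepsilon\|\nabla u\|_{L^2}^2x+\varepsilon'\|u\|_{L^p}^{2r}x^2+C$.
\item \emph{Martingale term.} Its quadratic variation equals $4\alpha^2(1+x)^{\alpha}$ times the good term, not a bounded multiple of it. In BDG you must pull out $\sup_t(1+x)^{\alpha}$ and absorb it into the left-hand side.
\end{itemize}
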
  
\section{ Global Existence and Pathwise Uniqueness of Solutions}
In this section, we establish   the global existence and pathwise uniqueness of solutions to complete the proof of Theorem~\ref{thm1}. The argument is divided into three steps.  First, we establish the global existence of solutions to the
truncated system \eqref{equ-u} and then derive the local existence of solutions to the
original system \eqref{equ1}.  Second, we prove  pathwise uniqueness for local strong  solutions. Third, relying on the corresponding uniform estimates, we construct a maximal local strong solution to the
 system \eqref{equ1} and then show the global existence result.

\subsection{Local existence} In this subsection, we establish the existence of local strong solutions to system \eqref{equ1}.  For each $N\in\mathbb N$, we first introduce a corresponding truncated system \eqref{equ-u} and
establish its global existence. We then define a suitable stopping time \(\tau_N\) such that the truncated system \eqref{equ-u} coincides with the original system \eqref{equ1}. Consequently, the global strong solution  of the truncated system, when restricted to \([0,\tau_N)\), becomes a local strong solution of the original system \eqref{equ1}. The precise result is stated in the following theorem.
\begin{thm}(Local existence)\label{thm_Local} 
    Suppose that  Assumption \ref{ass} holds. Let $u_0\in L^2(\mathcal {O})$ and  $v_0\in W^{1,\infty}(\mathcal {O}) $, 	 the system (\ref{equ1}) admits  local strong  solutions    in the  sense of Definition \eqref{def_local}.
\end{thm}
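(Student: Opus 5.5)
The plan is to solve the truncated system \eqref{equ-u} globally for each fixed $N$ and then stop it before the truncation becomes active. We work on a short interval $[0,T_0]$ and iterate. Let $\mathcal Z_{T_0}$ be the Banach space of $\{\mathcal F_t\}$-adapted processes $u$ with
\[
\|u\|_{\mathcal Z_{T_0}}^2:=\mathbb E\|u\|_{X_{T_0}}^2<\infty.
\]
For a given $\bar u\in\mathcal Z_{T_0}$ we define $\bar v$ as the mild solution of \eqref{equ-v_t} with $g=\bar u$ and initial datum $v_0$. We then define $u=\mathcal T\bar u$ as the solution of the \emph{linear} SPDE
\[
du=\Delta u\,dt-\theta_N(\|\bar u\|_{X_t}^2)\nabla\cdot(\bar u\nabla\bar v)\,dt+\theta_N(\|\bar u\|_{X_t}^2)\sigma(\bar u)\,dW(t),\qquad u(0)=u_0.
\]
In this equation the drift and the noise are frozen, so existence and uniqueness follow from the standard variational theory for linear equations \cite{liu2015stochastic}. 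The required bounds on the frozen data are as follows. On the set where $\theta_N\neq0$ we have $\|\bar u\|_{X_t}^2<2N$. Lemma \ref{lem_v}(i) then gives
\[
\int_0^t\|\nabla\bar v\|_{L^\infty}^2\,ds\le C(N,T_0,\|v_0\|_{W^{1,\infty}}).
\]
Hence $\bar u\nabla\bar v\in L^2(0,t;L^2)$. Also, \eqref{G-N} gives $\|\sigma(\bar u)\|^2_{L_2}\le C\|\bar u\|_{L^p}^{2r}\|\bar u\|_{L^2}^2$, whose time integral is bounded by a constant $C_N$, because $r(1-2/p)<1$. Itô's formula for $\|u\|_{L^2}^2$ together with BDG \eqref{BDG} then shows that $\mathcal T$ maps $\mathcal Z_{T_0}$ into itself.

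The contraction estimate is the core of the argument. Take $\bar u_1,\bar u_2$, set $w=\mathcal T\bar u_1-\mathcal T\bar u_2$, and apply Itô's formula to $\|w\|_{L^2}^2$. The drift difference is
\[
\theta_N(\|\bar u_1\|_{X_t}^2)\bar u_1\nabla\bar v_1-\theta_N(\|\bar u_2\|_{X_t}^2)\bar u_2\nabla\bar v_2 .
\]
We split it into three pieces:
\begin{enumerate}
\item a $\theta_N$ difference, bounded by $C_\theta N^{-1}\|\bar u_1-\bar u_2\|_{X_t}^2$ using that $\theta_N'$ is bounded;
\item a term $(\bar u_1-\bar u_2)\nabla\bar v_1$, controlled by $\|\nabla\bar v_1\|_{L^2_tL^\infty}$ times $\sup_t\|\bar u_1-\bar u_2\|_{L^2}$;
\item a term $\bar u_2\nabla(\bar v_1-\bar v_2)$, handled by Lemma \ref{lem_v}(i) applied to $\bar v_1-\bar v_2$ (zero initial datum), which gives $\int_0^t\|\nabla(\bar v_1-\bar v_2)\|_{L^\infty}^2\le C(t+t^2)\|\bar u_1-\bar u_2\|_{L^2_tH^1}^2$.
\end{enumerate}
The standard trick is needed when only one of the $\theta_N$ factors is nonzero: we use the stopping time at which $\|\bar u_i\|_{X_t}^2$ first reaches $2N$, and bound the quantities up to the earlier of the two stopping times. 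For the noise difference we use \eqref{sigma_2} with a small $\varepsilon$, absorbing $\varepsilon\|\nabla(\bar u_1-\bar u_2)\|^2$, together with the bound $C_{\varepsilon,\eta}\le C_N$ on the truncation region. The BDG inequality handles the supremum of the martingale term. Collecting these bounds should give
\[
\mathbb E\|w\|_{X_{T_0}}^2\le C_N\,\kappa(T_0)\,\mathbb E\|\bar u_1-\bar u_2\|_{X_{T_0}}^2,
\]
with $\kappa(T_0)\to0$ as $T_0\to0$. This decay comes from the factors $t+t^2$ and from Hölder in time for the $L^\infty_t$ parts. Choosing $T_0$ small, depending only on $N$, makes $\mathcal T$ a contraction, and Banach's fixed point theorem gives a solution on $[0,T_0]$. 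Since $T_0$ does not depend on the initial state, we restart at $T_0,2T_0,\dots$ and obtain a global solution $u_N$ of \eqref{equ-u}. Keeping track of the path-dependent $X_t$ norm when restarting requires some care.

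The main obstacle is this contraction step. The noise difference contains $\|\bar u_1-\bar u_2\|_{L^p}$, which is not controlled by $L^2$ alone. The chemotactic term couples the two unknowns through $\nabla\bar v$ in $L^\infty$, and this needs the $H^1$ part of $X_t$. If a small factor cannot be extracted cleanly from the $\varepsilon$-terms, the fallback is a weighted norm $\mathbb E\sup_t e^{-\lambda t}\|\cdot\|^2+\dots$ with large $\lambda$.

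Finally, we define
\[
\tau_N:=\inf\{t\ge0:\|u_N\|_{X_t}^2\ge N\}.
\]
This is a stopping time because $t\mapsto\|u_N\|_{X_t}$ is continuous and adapted. It is positive a.s. when $\|u_0\|_{L^2}^2<N$; for general $u_0$ we localize on $\{\|u_0\|^2<N\}$, or simply take $N>\|u_0\|^2$. On $[0,\tau_N]$ we have $\theta_N\equiv1$, so $(u_N(\cdot\wedge\tau_N),\tau_N)$ satisfies the weak formulation of Definition \ref{def_local}. Moreover $v_N(\cdot\wedge\tau_N)\in L^2(0,T;W^{1,\infty})$ by Lemma \ref{lem_v}(i), since $u_N\in L^2(0,T;H^1)$ up to $\tau_N$. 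This proves Theorem \ref{thm_Local}.
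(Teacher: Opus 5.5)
Your proposal is correct and follows essentially the same route as the paper: you freeze the chemotactic drift and the noise in the $\theta_N(\|\cdot\|_{X_t}^2)$-truncated system, solve the resulting linear SPDE by the variational theory, and obtain a contraction on a short interval from Lemma~\ref{lem_v}(i) and \eqref{sigma_2}, then extend interval by interval and stop at $\tau_N$ (where the paper partitions into the sets $A_s,B_s,C_s,D_s$, you use the hitting times of level $2N$, which is the same device). One small correction: the term $\varepsilon\|\nabla(\bar u_1-\bar u_2)\|_{L^2}^2$ coming from \eqref{sigma_2} cannot be absorbed into the left-hand side, which involves $w$; instead it enters the contraction constant additively, exactly as in the paper's $q(T)=\big(C(N,v_0,\varepsilon)(T+T^2+T^3)+\varepsilon\big)^{1/2}$, and likewise $C_{\varepsilon,\eta}$ is bounded by $C_N$ only after integrating in time, not pointwise.
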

\begin{proof}
To construct a local strong solution of system \eqref{equ1}, we first fix
\(N\in\mathbb N\) and use Banach's fixed point theorem to establish the
global existence of a solution \(u_N\) to the truncated system \eqref{equ-u}.
Let $S_T$ be the space of all progressively measurable processes $\rho$
such that 
\begin{align*}
   \rho\in L^2\big(\Omega;C([0,T];L^2(\mathcal O))\big)
\cap
L^2\big(\Omega;L^2(0,T;H^1(\mathcal O))\big), 
\end{align*}
equipped with the norm
\begin{align*}
  \|\rho\|_{S_T}^2
:=\mathbb{E}\big(\|\rho\|_{X_T}^2\big)=
\mathbb E\Big(
\sup_{t\in[0,T]}\|\rho(t)\|_{L^2(\mathcal O)}^2
+
\int_0^T\|\nabla \rho (t)\|_{L^2(\mathcal O)}^2dt\Big).  
\end{align*}
Then $S_T$ is a Banach space. 
	For \(\xi\in S_T\), let \(v[\xi]\) be the progressively measurable process
		such that, for a.e. \(\omega\in\Omega\), \(v[\xi](\omega)\) is the unique
		solution of
		\begin{equation}\label{equ-v}
			\begin{cases}
				\partial_t v=\Delta v-v+\xi(\omega,t,x),
				& x\in\mathcal O,\ t>0,\\
				\dfrac{\partial v}{\partial\nu}=0,
				& x\in\partial\mathcal O,\ t>0,\\
				v(x,0)=v_0(x),
				& x\in\mathcal O.
			\end{cases}
		\end{equation}
        Furthermore, by Lemma \ref{lem_v},  for a.e. $\omega$,  \[
\nabla v[\xi](\omega,\cdot,\cdot)\in L^2(0,T;L^\infty (\mathcal {O})),
\] and the corresponding estimate (\ref{v_infty}) and (\ref{v_r,q}) hold.
For any $\xi\in S_T$, we define $\mathcal T_N(\xi)=u_N$, where $u_N$ is the solution of the following  equation under homogeneous Neumann boundary conditions
\begin{equation}\label{equ-u_1}
\begin{cases}
du_N=\Delta u_Ndt-\theta_N(\|\xi\|_{X_t}^2)\nabla\cdot\big(\xi\nabla v[\xi]\big)dt+\theta_N(\|\xi\|_{X_t}^2)\sigma(\xi)dW(t),\\
u_N(x,0)=u_0.
\end{cases}
\end{equation}
Recall that  $\theta_N$ is the same cutoff function as that appearing in
\eqref{equ-u}. Hence,  for each fixed $N\in\mathbb N$ and for a.e. $\omega\in\Omega$, if $\theta_N(\|\xi\|_{X_t}^2)\neq0$,  we necessarily have $$\sup_{s\in[0,t]}\|\xi(s)\|_{L^2(\mathcal {O})}^2+\int_0^t\|\nabla \xi(s)\|_{L^2(\mathcal {O})}^2\,ds< 2N.$$
Thus the truncation guarantees that the chemotactic drift and noise coefficient is active only when $\xi $ remains bounded in $X_t$. 
 We shall prove that for sufficiently small $T>0$, the map $$\mathcal{T}_N:S_T\to S_T$$ is a contraction. 
 
 As a first step, we show $\mathcal{T}_N$ is well defined. Applying the existence theory of stochastic differential equations (\cite[Theorem 4.2.4]{ liu2015stochastic}) to prove that for every $\xi\in S_T$, the equation (\ref{equ-u_1}) admits a unique solution $u_N\in S_T$.

  Fix a stochastic basis
$(\Omega,\mathcal F,\{\mathcal F_t\}_{t\ge0},\mathbb P)$
with a complete and right-continuous filtration, and let $W$ be an $\{\mathcal F_t\}_{t\ge0}$-adapted cylindrical Wiener
process on $U$. We work with the Gelfand triple $H^1(\mathcal {O})\hookrightarrow L^2(\mathcal {O})\hookrightarrow H^{-1}(\mathcal {O}).$
 Set \[
 A(t,u_N,\omega):=\Delta u_N-\nabla\cdot f_\xi(t,\omega),
 \]
 where
 $$
f_\xi(t,\omega):=\theta_N(\|\xi(\omega)\|_{X_t}^2)\Big(\xi(t,\omega)\nabla v[\xi](t,\omega)\Big).$$
 For  fixed $\xi\in S_T$, using H\"older's inequality, we have 
\begin{align}\label{f_xi}
\nonumber&\int_0^T \|\nabla\cdot f_\xi(t,\omega)\|_{H^{-1}(\mathcal {O})}^2dt\le C\int_0^T \| f_\xi(t,\omega)\|_{L^2(\mathcal {O})}^2dt \\
\nonumber\le&
C\int_0^T \big|\theta_N(\|\xi\|_{X_t}^2)\big|^2
\|\xi(t,\omega)\|_{L^2(\mathcal {O})}^2
\|\nabla v[\xi](t,\omega)\|_{L^\infty(\mathcal {O})}^2dt\\
\le&C\sup_{t\in[0,T]}\Big(\big|\theta_N(\|\xi\|_{X_t}^2)\big|^2\|\xi(t,\omega)\|_{L^2(\mathcal {O})}^2\Big)
\int_0^T \|\nabla v[\xi](t,\omega)\|_{L^\infty(\mathcal {O})}^2dt.
\end{align}
Combining with the definition of the cutoff function $\theta_N(\|\xi(\omega)\|_{X_t}^2)$ and   the estimate \eqref{v_infty} of $v[\xi]$ to \eqref{equ-v}, 
we infer that for a.e. $\omega\in\Omega$,
\begin{align}\label{f_xi_2}
    \int_0^T \|\nabla\cdot f_\xi(t,\omega)\|_{H^{-1}(\mathcal {O})}^2dt\le C\int_0^T \| f_\xi(t,\omega)\|_{L^2(\mathcal {O})}^2dt
\le
C(N,T,v_0).
\end{align}
Modifying it on a
	\(dt\otimes d\mathbb P\)-null set if necessary, we may assume that
	\[
	f_\xi:[0,T]\times\Omega\to L^2(\mathcal O).
	\]
	 This modification does not affect any of the
	Bochner integrals appearing below.
Moreover,  since $\xi$ is progressively measurable and $v[\xi]$ is obtained from $\xi$ through the linear parabolic equation \eqref{equ-v}, the process $v[\xi]$ is also progressively measurable. Hence, the operator $$A:[0,T]\times H^1(\mathcal {O})\times\Omega\to H^{-1}(\mathcal {O})$$ is well-defined and for each $u_N\in  H^1(\mathcal {O})$, the map $(t,\omega)\mapsto A(t,u_N,\omega) $ is progressively measurable. Set
\[
B(t,u_N(t),\omega):=\theta_N(\|\xi(\omega)\|_{X_t}^2)\sigma(\xi(t,\omega)),
\]
where $\sigma(\xi)$ satisfies Assumption \ref{ass}. For a fixed $\xi\in S_T$, by Lemma \ref{lem_sigma}, we have 
\begin{align}\label{B}
\|B(t,u_N(t))\|_{L_2(U,L^2(\mathcal{O}))}^2=\big|\theta_N(\|\xi\|_{X_t}^2)\big|^2\|\sigma(\xi)\|_{L_2(U,L^2(\mathcal{O}))}^2< \infty
\end{align}
for $dt\otimes d\mathbb{P}$-a.e. $(t,\omega)$.
Thus, after modifying it on $dt\otimes d\mathbb{P}$-null set,  the operator $$B:[0,T]\times H^1(\mathcal {O})\times\Omega\to {L_2(U,L^2(\mathcal{O}))} $$ is well-defined and progressively measurable.

In the following, for each $(t,\omega)\in[0,T]\times\Omega$, we will verify that  $A(t,\cdot,\omega)$, $B(t,\cdot,\omega)$ satisfy all the assumptions required by the variational existence theorem (\cite{ liu2015stochastic,rockner2024well,krylov1981stochastic}).

\noindent\textbf{(H1) (Hemicontinuity).}
Let $u_{N},\bar{u}_N,w\in H^1(\mathcal {O})$. For every $\lambda\in\mathbb{R}$, the map
\[
\lambda\mapsto\big\langle A(t,u_N+\lambda \bar{u}_N,\omega),w\big\rangle_{H^{-1}{(\mathcal {O})},H^1{(\mathcal {O})}}
=
-\int_{\mathcal {O}}\nabla (u_N+\lambda \bar u_N)\cdot \nabla w\,dx+\int_{\mathcal {O}} f_\xi\cdot \nabla w dx
\]
is continuous on $\mathbb{R}$. 

\noindent\textbf{(H2) (Weak monotonicity).} For all $u_N,\bar  u_N\in H^1(\mathcal {O})$, 
\begin{align*}
	&2\big\langle A(t,u_N,\omega)-A(t,\bar 
 u_N,\omega),u_N-\bar u_N\big\rangle_{H^{-1}{(\mathcal {O})},H^1{(\mathcal {O})}}
	\\
	=&2\langle \Delta(u_N-\bar u_N),u_N-\bar u_N\rangle_{H^{-1}(\mathcal {O}),H^1(\mathcal {O})}
	=-2\|\nabla(u_N-\bar u_N)\|_{L^2(\mathcal O)}^2.
\end{align*}

\noindent\textbf{(H3) (Coercivity).}
Let $u_N\in H^1(\mathcal {O})$. By Young's inequality and  Lemma \ref{lem_sigma}, we have
\begin{align*}
    &2\big\langle A(t,u_N,\omega),u_N\big\rangle_{H^{-1}{(\mathcal {O})},H^1{(\mathcal {O})}}+\|B(t,u_N,\omega)\|_{L_2(U,L^2(\mathcal O))}^2\\\le&
-\|\nabla u_N\|_{L^2(\mathcal {O})}^2+2\|f_\xi(t,\omega)\|_{L^2(\mathcal {O})}^2+B_0^2\big|\theta_N(\|\xi\|_{X_t}^2)\big|^2\|\xi\|_{L^p(\mathcal{O})}^{2r}\|\xi\|_{L^2(\mathcal{O})}^2.
\end{align*}
By (\ref{f_xi_2}) and the definition of function $\theta_N$ , the last two terms are integrable on $[0,T]\times\Omega$.

\noindent\textbf{(H4) (Growth).}
For any $u_N,w\in H^1(\mathcal {O})$, we have
\begin{align*}
    \big|\langle A(t,u_N,\omega),w\rangle_{H^{-1}{(\mathcal {O})},H^1{(\mathcal {O})}}\big|
&=
\left|\int_{\mathcal {O}}\nabla u_N\cdot \nabla w\,dx+\int_{\mathcal {O}}f_\xi \cdot \nabla w\,dx\right|
\\&\le
\|\nabla u_N\|_{L^2(\mathcal {O})}\|\nabla w\|_{L^2(\mathcal {O})}+\|f_\xi(t,\omega)\|_{L^2(\mathcal {O})}\|\nabla w\|_{L^2(\mathcal {O})}.
\end{align*}
Taking the supremum over all $w\in H^1(\mathcal {O})$ with $\|w\|_{H^1(\mathcal {O})}\le 1$, we deduce that
\[
\|A(t,u_N,\omega)\|_{H^{-1}(\mathcal {O})}
\le C\big(\|u_N\|_{H^1(\mathcal {O})}+\|f_\xi(t,\omega)\|_{L^2(\mathcal {O})}\big),
\]
where $\|f_\xi\|_{L^2(\mathcal {O})}\in  L^2([0,T]\times \Omega) $.

Thus all the assumptions of the variational existence theorem \cite[Theorem 4.2.4]{liu2015stochastic} and \cite[Theorem 4.7]{gawarecki2010stochastic} are satisfied. Consequently, for each $\xi\in S_T$,
there exists a unique continuous $L^2(\mathcal{O})$-valued ${\mathcal{F}_t}$-adapted process $u_N$
to \eqref{equ-u_1}. In particular,
\[
\mathbb E\Big(\sup_{t\in[0,T]}\|u_N(t)\|_{L^2(\mathcal O)}^2\Big)
+\mathbb E\Big(\int_0^T\|\nabla u_N(t)\|_{L^2(\mathcal O)}^2dt\Big)<\infty,
\]
and hence $u_N\in S_T$. Therefore, $\mathcal T_N:S_T\to S_T$ is well defined.

Next, we prove that there exists $T^*>0$ such that the mapping
\[
\mathcal T_N:S_{T^*}\to S_{T^*}
\]
is a contraction. More precisely, we shall show that
\[
\|\mathcal T_N(\xi)-\mathcal T_N(\bar\xi)\|_{S_{T^*}}
\le q(T^*)\,\|\xi-\bar\xi\|_{S_{T^*}},
\qquad \forall\,\xi,\bar\xi\in S_{T^*},
\]
for some constant $q(T^*)\in(0,1)$.
To this end, let $u_N=\mathcal T_N(\xi), \bar u_N=\mathcal T_N(\bar\xi), v=v[\xi], \bar v=v[\bar\xi].$
Set $w_N:=u_N-\bar u_N$. Then $w_N$ satisfies $w_N(0)=0$ and
    \begin{align*}
       dw_N=&\Big[\Delta {w_N}-\Big(\theta_N(\|\xi\|_{X_t}^2)\nabla\cdot(\xi\nabla v)-\theta_N(\|\bar \xi\|_{X_t}^2)\nabla\cdot(\bar \xi\nabla \bar v)\Big)\Big]dt\\&+\Big(\theta_N(\|\xi\|_{X_t}^2)\sigma(\xi)-\theta_N(\|\bar \xi\|_{X_t}^2)\sigma(\bar \xi )\Big)dW(t).
    \end{align*}
    Applying the It\^o formula to $\|{w_N}(t)\|_{L^2(\mathcal {O})}^2$,   we obtain that for every  $t\in[0,T]$,
    \begin{align}\label{U_1}
    \nonumber&\|{w_N}(t)\|_{L^2(\mathcal {O})}^2\\\nonumber
    =&2\int_0^t\big\langle \Delta {w_N}(s),{w_N}(s)\big\rangle_{{H^{-1}(\mathcal {O})},{H^1(\mathcal {O})}}\,ds\\\nonumber &-2\int_0^t\big\langle\big(\theta_N(\|\xi\|_{X_s}^2)\nabla\cdot(\xi(s)\nabla v(s))-\theta_N(\|\bar \xi\|_{X_s}^2)\nabla\cdot(\bar \xi(s)\nabla \bar v(s))\big),{w_N}(s)\big\rangle_{{H^{-1}(\mathcal {O})},{H^1(\mathcal {O})}}\,ds\\\nonumber &+\int_0^t\|\theta_N(\|\xi\|_{X_s}^2)\sigma(\xi(s))-\theta_N(\|\bar \xi\|_{X_s}^2)\sigma(\bar \xi(s) )\|_{L_2({w_N},L^2(\mathcal {O}))}^2 \,ds\\\nonumber&+2\int_0^t\big\langle {w_N}(s),\big(\theta_N(\|\xi\|_{X_s}^2)\sigma(\xi(s))-\theta_N(\|\bar \xi\|_{X_s}^2)\sigma(\bar \xi(s) )\big)dW(s)\big\rangle_{L^2(\mathcal {O})}\\\nonumber
    \le&-\int_0^t\|\nabla {w_N}(s)\|_{L^2(\mathcal {O})}^2\,ds+\int_0^t\|\theta_N(\|\xi\|_{X_s}^2)(\xi\nabla v)-\theta_N(\|\bar \xi\|_{X_s}^2)(\bar \xi\nabla \bar v)\|_{L^2(\mathcal {O})}^2\,ds\\&\nonumber +B_0^2\int_0^t
\big\|\theta_N(\|\xi\|_{X_s}^2)\|\xi(s)\|_{L^p(\mathcal O)}^r \xi(s)-\theta_N(\|\bar \xi\|_{X_s}^2)\|\bar \xi(s) \|_{L^p(\mathcal O)}^r \bar \xi(s)\big\|_{L^2(\mathcal O)}^2\,ds\\&+2\int_0^t\big\langle {w_N}(s),\big(\theta_N(\|\xi\|_{X_s}^2)\sigma(\xi(s))-\theta_N(\|\bar \xi\|_{X_s}^2)\sigma(\bar \xi(s) )\big)dW(s)\big\rangle_{L^2(\mathcal {O})}.
    \end{align}
     We define \begin{align*}
H(s):=&\|\theta_N(\|\xi\|_{X_s}^2)\xi\nabla v-\theta_N(\|\bar \xi\|_{X_s}^2)\bar \xi\nabla \bar v\|_{L^2(\mathcal {O})}^2\end{align*}
    Set   $A_s:=\big\{\omega\in\Omega:\,\|\xi(\omega)\|_{X_s}^2\vee \|\bar \xi(\omega)\|_{X_s}^2< 2N\big\}$. Since $\theta_N$ is Lipschitz continuous and $\big|\| \xi\|_{X_s}-\| \bar \xi\|_{X_s}\big|\le \|\xi- \bar \xi\|_{X_s}$, we have
\begin{align}
    \nonumber&\big|\theta_N(\|\xi\|_{X_s}^2)-\theta_N(\|\bar \xi\|_{X_s}^2)\big|^2\le \frac{{C^2_\theta}}{N^2}\big|\|\xi\|_{X_s}^2-\|\bar \xi\|_{X_s}^2\big|^2\\\le& \frac{{C^2_\theta}}{N^2}\big(\|\xi\|_{X_s}+\|\bar \xi\|_{X_s}\big)^2\big(\|\xi\|_{X_s}-\|\bar \xi\|_{X_s}\big)^2\le C\|\xi-\bar \xi\|_{X_s}^2 .
\end{align} Thus, 
    \begin{align*}
\int_0^t1_{A_s}H(s)\,ds\le&3\int_0^t1_{A_s}\theta_N(\|\xi\|_{X_s}^2)^2\|\xi-\bar \xi\|_{L^2(\mathcal {O})}^2\|\nabla v\|_{L^\infty(\mathcal {O})}^2\,ds\\&+3\int_0^t1_{A_s}\big|\theta_N(\|\xi\|_{X_s}^2)-\theta_N(\|\bar \xi\|_{X_s}^2)\big|^2\| \bar \xi\|_{L^2(\mathcal {O})}^2\|\nabla v\|_{L^\infty(\mathcal {O})}^2\,ds\\&+3\int_0^t1_{A_s}\theta_N(\|\bar \xi\|_{X_s}^2)^2\|\bar \xi\|_{L^2(\mathcal {O})}^2\|\nabla(v- \bar v)\|_{L^\infty(\mathcal {O})}^2\,ds\\\le&C\big(\sup_{s\in[0,t]}\|\xi(s)-\bar \xi(s)\|_{L^2(\mathcal {O})}^2\big)\int_0^t1_{A_s}\|\nabla v\|_{L^\infty(\mathcal {O})}^2\,ds\\&+C\|\xi- \bar \xi\|_{X_t}^21_{A_s}\big(\sup_{s\in[0,t]}\|\bar \xi(s)\|_{L^2(\mathcal {O})}^2\big)\int_0^t\|\nabla v\|_{L^\infty(\mathcal {O})}^2\,ds\\&+C1_{A_s}\big(\sup_{s\in[0,t]}\|\bar \xi(s)\|_{L^2(\mathcal {O})}^2\big)\int_0^t\|\nabla (v-\bar v)\|_{L^\infty(\mathcal {O})}^2\,ds.
\end{align*}
By Lemma \ref{lem_v} and the definition of the norm $X_t$, we obtain 
\begin{align}\label{1_A}
   \nonumber \int_0^t1_{A_s}H(s)\,ds\le& C(N,v_0)(t+t^2+t^3)\sup_{s\in[0,t]}\|\xi(s)-\bar \xi(s)\|_{L^2(\mathcal {O})}^2\\\nonumber &+C(N,v_0)(t+t^2+t^3)\|\xi-\bar \xi\|_{X_t}^2\\\le&C(N,v_0)(t+t^2+t^3)\|\xi-\bar \xi\|_{X_t}^2.
\end{align}
We define $B_s:=\big\{\omega\in\Omega:\|\xi(\omega)\|_{X_s}^2\wedge \|\bar \xi(\omega)\|_{X_s}^2\ge 2N\big\}$, then
    \begin{align}\label{1_B}
        \int_0^t1_{B_s}H(s)\,ds=0
\end{align}
Let $C_s:=\big\{\omega\in\Omega:\|\xi(\omega)\|_{X_s}^2< 2N, \|\bar \xi(\omega)\|_{X_s}^2\ge 2N\big\}$, then
    \begin{align}\label{1_C}
        \nonumber\int_0^t1_{C_s}H(s)\,ds&=\int_0^t1_{C_s}\Big|\theta_N(\|\xi\|_{X_s}^2)\Big|^2\|\xi\nabla v\|_{L^2(\mathcal {O})}^2\,ds\\
        \nonumber&=\int_0^t1_{C_s}\Big|\theta_N(\|\xi\|_{X_s}^2)-\theta_N(\|\bar \xi\|_{X_s}^2)\Big|^2\|\xi\nabla v\|_{L^2(\mathcal {O})}^2\,ds\\
        \nonumber&\le C(N)\|\xi- \bar \xi\|_{X_t}^2\int_0^t1_{C_s}\|\nabla v\|_{L^\infty(\mathcal {O})}^2\|\xi\|_{L^2(\mathcal {O})}^2\,ds\\&\le C(N,v_0)(t+t^2+t^3)\|\xi-\bar \xi\|_{X_t}^2.
\end{align}
Similarly, for $D_s:=\big\{\omega\in\Omega:\|\xi(\omega)\|_{X_s}^2\ge 2N, \|\bar \xi(\omega)\|_{X_s}^2< 2N\big\}$, we have 
    \begin{align}\label{1_D}
        \int_0^t 1_{D_s}H(s)\,ds\le C(N,v_0)(t+t^2+t^3)\|\xi-\bar \xi\|_{X_t}^2.
\end{align}
Combining the above estimates \eqref{1_A}-\eqref{1_D}, we obtain
\begin{align}\label{H(s)}
\nonumber\int_0^tH(s)\,ds&=\int_0^t1_{A_s}H(s)\,ds+\int_0^t1_{B_s}H(s)\,ds+\int_0^t1_{C_s}H(s)\,ds+\int_0^t1_{D_s}H(s)\,ds\\&\le C(N,v_0)(t+t^2+t^3)\|\xi-\bar \xi\|_{X_t}^2.
\end{align}
 Next, we apply the same method as for $H(s)$ to analyze $G(s)$. 
 
On $A_s$, by Lemma \ref{lem_sigma} on $\sigma$, we  obtain 
\begin{align*}
\int_0^t1_{A_s}G(s)ds=&\int_0^t1_{A_s}\big\|\theta_N(\|\xi\|_{X_s}^2)\|\xi\|_{L^p(\mathcal O)}^r \xi-\theta_N(\|\bar \xi\|_{X_s}^2)\|\bar \xi \|_{L^p(\mathcal O)}^r \bar \xi\big\|_{L^2(\mathcal O)}^2ds
\\
\le &2\int_0^t1_{A_s}\big|\theta_N(\|\xi\|_{X_s}^2)-\theta_N(\|\bar \xi\|_{X_s}^2)\big|^2\|\xi\|_{L^p(\mathcal O)}^{2r}\|\xi\|_{L^2(\mathcal O)}^2ds\\&+2\int_0^t1_{A_s}\theta_N(\|\bar \xi\|_{X_s}^2)^2\big\|\|\xi\|_{L^p(\mathcal O)}^r \xi-\|\bar \xi \|_{L^p(\mathcal O)}^r \bar \xi\big\|_{L^2(\mathcal O)}^2ds\\
\le &2\int_0^t1_{A_s}\big|\theta_N(\|\xi\|_{X_s}^2)-\theta_N(\|\bar \xi\|_{X_s}^2)\big|^2\|\xi\|_{L^p(\mathcal O)}^{2r}\|\xi\|_{L^2(\mathcal O)}^2ds\\&+\int_0^t1_{A_s}C_{\varepsilon,\eta}(\xi,\bar\xi) \|\xi-\bar \xi\|_{L^2(\mathcal O)}^2ds+\varepsilon \int_0^t\|\nabla \xi-\nabla\bar \xi\|_{L^2(\mathcal O)}^2ds,
\end{align*}
where \(C_{\varepsilon,\eta}(\xi,\bar\xi)\) is defined by
\eqref{const}, namely,  \begin{align*}
   C_{\varepsilon,\eta}(\xi,\bar\xi)=&C\big(
\|\xi\|_{L^2(\mathcal O)}^{\frac{4r}{p-r(p-2)}}
+
\|\xi\|_{L^2(\mathcal O)}^{2r}+\|\bar \xi\|_{L^2(\mathcal O)}^{\frac{4r}{p-r(p-2)}}
+
\|\bar \xi\|_{L^2(\mathcal O)}^{2r}+\|\bar \xi\|_2^{\frac{2rp}{2r-rp+p}}\big)  \\&+\eta\|\nabla \xi\|_{L^2(\mathcal O)}^2+
\eta\|\nabla \bar \xi\|_{L^2(\mathcal O)}^2.
\end{align*} 
Furthermore, using \eqref{G-N} and choosing $\eta>0$ sufficiently small 
\begin{align*}
    \int_0^t1_{A_s}G(s)\,ds\le &\varepsilon\|\xi-\bar \xi\|_{X_t}^2
+C(N)
\|\xi-\bar \xi\|_{X_t}^2\int_0^t1_{A_s}\|\xi\|_{L^p(\mathcal{O})}^{2r}ds\\
&+\|\xi-\bar \xi\|_{X_t}^2\int_0^t1_{A_s}  C_{\varepsilon,\eta}(\xi(s),\bar\xi(s))ds\\
\le &C(N,\varepsilon)t\|\xi-\bar \xi\|_{X_t}^2+\varepsilon\|\xi-\bar \xi\|_{X_t}^2.
\end{align*}
On $C_s$, we have 
\begin{align*}
\int_0^t1_{C_s}G(s)\,ds=&\int_0^t1_{C_s}\big|\theta_N(\|\xi\|_{X_s}^2)-\theta_N(\|\bar \xi\|_{X_s}^2)\big|^2\big\|\xi\|_{L^p(\mathcal O)}^r \xi\big\|_{L^2(\mathcal O)}^2ds\\\le& C(N)\|\xi-\bar \xi\|_{X_t}^2\int_0^t1_{C_s}\|\xi\|_{L^p(\mathcal{O})}^{2r}ds\le C(N,\varepsilon)t\|\xi-\bar \xi\|_{X_t}^2+\varepsilon\|\xi-\bar \xi\|_{X_t}^2.
\end{align*}
Therefore, we infer
\begin{align}\label{G(s)}
\nonumber\int_0^tG(s)\,ds&=\int_0^t1_{A_s}G(s)\,ds+\int_0^t1_{B_s}G(s)\,ds+\int_0^t1_{C_s}G(s)\,ds+\int_0^t1_{D_s}G(s)\,ds\\&\le C(N,\varepsilon)t\|\xi-\bar \xi\|_{X_t}^2+\varepsilon\|\xi-\bar \xi\|_{X_t}^2.
\end{align}
Therefore, substituting the above inequality \eqref{H(s)} and \eqref{G(s)} into \eqref{U_1} yields
\begin{align}\label{U_2}
&\nonumber\|{w_N}(t)\|_{L^2(\mathcal {O})}^2+\int_0^t\|\nabla {w_N}(s)\|_{L^2(\mathcal {O})}^2\,ds\\
\nonumber\le &\big(C(N,v_0,\varepsilon)(t+t^2+t^3)+\varepsilon\big)\|\xi-\bar \xi\|_{X_t}^2\\&+2\int_0^t\big\langle
{w_N}(s),\big(\theta_N(\|\xi\|_{X_s}^2)\sigma(\xi(s))-\theta_N(\|\bar \xi\|_{X_s}^2)\sigma(\bar \xi(s) )\big)dW(s)\big\rangle_{L^2(\mathcal {O})}.
\end{align}
  Taking the supremum over \(t\in[0,T]\) in \eqref{U_2} and then taking
expectations, we obtain
    \begin{align}\label{EU}
\nonumber&\mathbb{E}\big(\sup_{t\in[0,T]}\|{w_N}(t)\|_{L^2(\mathcal {O})}^2\big)+\mathbb{E}\Big(\int_0^T\|\nabla {w_N}(t)\|_{L^2(\mathcal {O})}^2dt\Big)\\
    \nonumber\le &\big(C(N,v_0,\varepsilon)(T+T^2+T^3)+\varepsilon\big)\|\xi-\bar \xi\|_{S_T}^2\\
&+2\mathbb{E}\Big(\sup_{t\in[0,T]}\Big|\int_0^t\big\langle {w_N}(s),\big(\theta_N(\|\xi\|_{X_s}^2)\sigma(\xi(s))-\theta_N(\|\bar \xi\|_{X_s}^2)\sigma(\bar \xi(s) )\big)dW(s)\big\rangle_{L^2(\mathcal {O})}\Big|\Big).
    \end{align}
    Using the Burkholder-Davis-Gundy inequality and Young's inequality, combining the estimate \eqref{G(s)} we have
    \begin{align*}
        &2\mathbb{E}\Big(\sup_{t\in[0,T]}\Big|\int_0^t\big\langle {w_N}(s),\big(\theta_N(\|\xi\|_{X_s}^2)\sigma(\xi(s))-\theta_N(\|\bar \xi\|_{X_s}^2)\sigma(\bar \xi(s) )\big)dW(s)\big\rangle_{L^2(\mathcal {O})}\Big|\Big)\\\le&C \mathbb{E}\Big(\int_0^T\|{w_N}(t)\|_{L^2(\mathcal {O})}^2\|\theta_N(\|\xi\|_{X_t}^2)\sigma(\xi(t))-\theta_N(\|\bar \xi\|_{X_t}^2)\sigma(\bar \xi(t) )\|_{L_2(U,{L^2(\mathcal {O})})}^2dt\Big)^\frac{1}{2}\\
        \le &C\mathbb{E}\Big(\sup_{t\in[0,T]}\|{w_N}(t)\|_{L^2(\mathcal {O})}^2\int_0^T\big\|\theta_N(\|\xi\|_{X_t}^2)\|\xi\|_{L^p(\mathcal O)}^r \xi-\theta_N(\|\bar \xi\|_{X_t}^2)\|\bar \xi \|_{L^p(\mathcal O)}^r \bar \xi\big\|_{L^2(\mathcal O)}^2dt\Big)^\frac{1}{2}\\
        \le &\frac{1}{4}\mathbb{E}\Big(\sup_{t\in[0,T]}\|{w_N}(t)\|_{L^2(\mathcal {O})}^2\Big)+\big(C(N,v_0,\varepsilon)T+\varepsilon\big)\mathbb{E}\|\xi-\bar \xi\|_{X_T}^2.
    \end{align*}
   Substituting the above inequality into (\ref{EU}), we obtain
   \begin{align*}
&\mathbb{E}\Big(\sup_{t\in[0,T]}\|{w_N}(t)\|_{L^2(\mathcal {O})}^2\Big)+\mathbb{E}\Big(\int_0^T\|\nabla {w_N}(t)\|_{L^2(\mathcal {O})}^2dt\Big)\\
\le& \big(C(N,v_0,\varepsilon)(T+T^2+T^3)+\varepsilon\big)\|\xi-\bar \xi\|_{S_T}^2.
    \end{align*}
Equivalently,
\[
\|\mathcal T_N(\xi)-\mathcal T_N(\bar\xi)\|_{S_T}
\le q(T)\|\xi-\bar \xi\|_{S_T},
\]
where $q(T):=\big(C(N,v_0,\varepsilon)(T+T^2+T^3)+\varepsilon\big)^\frac{1}{2}$.Choose $\varepsilon>0$ sufficiently small and then choose $T^*>0$
sufficiently small such that $q(T^*)<1$.
Hence, $\mathcal T_N$ is a contraction on $S_{T^*}$. 

By Banach's fixed point theorem, there exists a unique fixed point $u_N^*\in S_{T^*}$ of $\mathcal T_N$. In particular, $u_N^*$ is the unique solution to the truncated equation \eqref{equ-u} on $[0,T^*]$.

We now extend the local strong solution from $[0,T^*]$ to $[0,2T^*]$. Denote
\[
\widetilde S_{2T^*}
:=
\Big\{
u\in S_{2T^*}:\ u=u_N^*\ \text{on}\,[0,T^*],\ \mathbb P\text{-a.s.}
\Big\}.
\]
Equipped with the metric induced by $S_{2T^*}$,
\[
d(u,\bar u):=\|u-\bar u\|_{S_{2T^*}},
\]
the space $\widetilde S_{2T^*}$ is a complete metric space.

For any $\xi\in \widetilde S_{2T^*}$, let $v[\xi]$ denote the unique solution on $[0,2T^*]$ of the corresponding linear parabolic equation. Since $\xi\in \widetilde S_{2T^*}\subset S_{2T^*}$,    the same estimates from Lemma \ref{lem_v}    remain valid for $v[\xi]$ on $[0,2T^*]$.
We define $\widetilde{\mathcal T}_N(\xi)=u_N$, where $u_N\in \widetilde S_{2T^*}$ is given by
\[
u_N(t)=u_N^*(t),\qquad t\in[0,T^*],
\]
and on $[T^*,2T^*]$, $u_N$ solves
\begin{equation}\label{equ-u2}
\begin{cases}
du_N=\Delta u_N\,dt-\theta_N(\|\xi\|_{X_t}^2)\nabla\cdot\bigl(\xi\nabla v[\xi]\bigr)dt+\theta_N(\|\xi\|_{X_t}^2)\sigma(\xi)dW(t),
& t\in[T^*,2T^*],\\
u_N(T^*)=u_N^*(T^*).
\end{cases}
\end{equation}
Since $u_N^*\in L^2\big(\Omega;C([0,T^*];L^2(\mathcal {O}))\big)$,
the random variable $u_N^*(T^*)$ is well defined and $\mathcal F_{T^*}$-measurable.

Let $\xi,\bar\xi\in\widetilde S_{2T^*}$. Since $\xi=\bar\xi=u_N^*$ on $[0,T^*]$, we have $\xi-\bar\xi=0$ on $[0,T^*]$. Hence, the same difference estimates as in $[0,T^*]$ apply on $[T^*,2T^*]$ with the same contraction constant depending only on the length $T^*$ of the interval. Therefore,
\[
\|\widetilde{\mathcal T}_N(\xi)-\widetilde{\mathcal T}_N(\bar\xi)\|_{ S_{2T^*}}
\le
q(T^*)\,\|\xi-\bar\xi\|_{S_{2T^*}},
\]
where
\[
q(T^*)
=
\big(C(N,v_0,\varepsilon)T^*+\varepsilon\big)^{1/2}<1.
\]
Thus $\widetilde{\mathcal T}_N$ is a contraction on $\widetilde S_{2T^*}$. By Banach's fixed point theorem, there exists a unique fixed point $u_N^{**}\in \widetilde S_{2T^*},$
which is the unique solution to the truncated equation on $[0,2T^*]$. Now choose $m\in\mathbb N$ such that $mT^*\ge T$. Repeating the above argument finitely many times on the intervals $[kT^*,(k+1)T^*]$, $k=1,\dots,m-1$, we obtain a unique solution $u_N\in S_T$ to the truncated equation \eqref{equ-u} on $[0,T]$.

Define the stopping time
      $$\tau_N:=\inf\big\{t\ge 0:\ \sup_{s\in[0,t]}\|u_N(s)\|_{L^2(\mathcal {O})}^2+\int_0^t\|\nabla u_N(s)\|_{L^2(\mathcal {O})}^2\,ds\ge N\big\},$$
By the definition of $\theta_N$, we have
\[
\theta_N(\|u_N\|_{X_t}^2)=1,
\qquad  0\le t <\tau_N.
\]
Therefore, on $[0,\tau_N)$ the truncated equation (\ref{equ-u}) coincides with the original equation (\ref{equ1}). Consequently, $(u_N,\tau_N)$ is a local strong solution to \eqref{equ1} and for every \(T>0\),
\[
u_N(\cdot\wedge\tau_N)
\in L^2(\Omega;C([0,T];L^2(\mathcal O)))
\cap L^2(\Omega;L^2(0,T;H^1(\mathcal O))).
\] 
Thus, we complete the proof of Theorem \ref{thm_Local}.
\end{proof}
\subsection{Pathwise uniqueness and maximal solution}
In this subsection, we establish pathwise uniqueness for local strong solutions  on a fixed stochastic basis. Furthermore, combining the local existence and pathwise uniqueness results, we construct a maximal  solution.

\begin{thm}(Pathwise uniqueness)\label{thm3.2}
      Under Assumption \ref{ass}, let $(u_{N_1},\tau_{N_1})$ and $(u_{N_2},\tau_{N_2})$ be two local  solutions to \eqref{equ1} in the sense of Definition \ref{def_local}, defined on the same stochastic basis with
the same Wiener process. Let $v_1:=v[u_{N_1}], \,  v_2:=v[u_{N_2}]$
and assume that the two corresponding $v$-equations have the
same initial data $v_0$. Assume that
      \[
	u_{N_1}(0)=u_{N_2}(0)
	\qquad \mathbb P\text{-a.s.}
	\]
Set $\tau:=\tau_{N_1}\wedge\tau_{N_2}.$
Then
\[
\mathbb{P}\Big(u_{N_1}(t)=u_{N_2}(t)\ \text{for all } t\in[0,\tau)\Big)=1.
\]
   \end{thm}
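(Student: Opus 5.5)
We will estimate the difference $w:=u_{N_1}-u_{N_2}$ and $z:=v_1-v_2$ on $[0,\tau)$, localized by an additional stopping time so that all nonlinear quantities are bounded. Concretely, for $R>0$ we set
\[
\tau_R:=\tau\wedge\inf\Big\{t\ge0:\ \|u_{N_1}\|_{X_t}^2+\|u_{N_2}\|_{X_t}^2+\int_0^t\big(\|\nabla v_1\|_{L^\infty(\mathcal O)}^2+\|\nabla v_2\|_{L^\infty(\mathcal O)}^2\big)ds\ge R\Big\}.
\]
By Definition \ref{def_local} these quantities are finite a.s. on $[0,\tau)$, so $\tau_R\uparrow\tau$ as $R\to\infty$. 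It therefore suffices to show $w(\cdot\wedge\tau_R)\equiv0$ a.s. for each fixed $R$.

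Since $z$ solves $z_t=\Delta z-z+w$ with $z(0)=0$, Lemma \ref{lem_v}(ii), with $q$ slightly above $2$, $m=2$ and $\gamma=2$, gives $\int_0^t\|\nabla z\|_{L^q}^2\,ds\le C\int_0^t\|w\|_{L^2}^2\,ds$. An energy estimate for $z$ gives a similar bound in $L^2$. We apply It\^o's formula to $\|w(t\wedge\tau_R)\|_{L^2}^2$, which produces three terms.
\begin{itemize}
\item[(a)] The dissipation $-2\int\|\nabla w\|^2$.
\item[(b)] The chemotaxis term $2\int(u_{N_1}\nabla v_1-u_{N_2}\nabla v_2,\nabla w)$. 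We split it as $w\nabla v_1+u_{N_2}\nabla z$.
\item[(c)] The Itô correction $\|\sigma(u_{N_1})-\sigma(u_{N_2})\|^2_{L_2}$, together with a martingale.
\end{itemize}
The piece $w\nabla v_1$ is bounded by $\tfrac12\|\nabla w\|^2+C\|\nabla v_1\|_{L^\infty}^2\|w\|^2$. For $u_{N_2}\nabla z$, we use Hölder with $\|u_{N_2}\|_{L^{q'}}\|\nabla z\|_{L^q}$ for a suitable $q'<\infty$, where $\frac1{q}+\frac1{q'}=\frac12$. Gagliardo–Nirenberg then gives $\|u_{N_2}\|_{L^{q'}}^2\le C\|u_{N_2}\|_{H^1}^{2}$. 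For term (c) we use \eqref{sigma_2} of Lemma \ref{lem_sigma} with $\varepsilon=\tfrac12$. On $[0,\tau_R]$ its constant $C_{\varepsilon,\eta}(u_{N_1},u_{N_2})$ is bounded by $C_R+\eta(\|\nabla u_{N_1}\|^2+\|\nabla u_{N_2}\|^2)$, which is integrable in time.

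Collecting these bounds, we arrive at
\[
\|w(t\wedge\tau_R)\|^2+\int_0^{t\wedge\tau_R}\|\nabla w\|^2\le\int_0^{t\wedge\tau_R}\lambda(s)\Big(\|w(s)\|^2+\int_0^s\|w\|^2\Big)ds+M_t .
\]
Here $\lambda\ge0$ is an adapted process with $\int_0^{\tau_R}\lambda\,ds\le C_R$, and $M_t$ is a local martingale.

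\textbf{Main obstacle.} The $u_{N_2}\nabla z$ term carries a time-nonlocal factor $\int_0^s\|w\|^2$ multiplied by a random weight $\lambda$. We handle this with a stochastic Gronwall argument. We apply Itô's formula to $e^{-\Lambda(t)}\|w(t)\|^2$, where $\Lambda(t)=\int_0^t\lambda$. We bound the nonlocal part using $\int_0^s\|w\|^2\le s\sup_{r\le s}\|w(r)\|^2$, and if necessary we split $[0,T]$ into small subintervals. Because $\Lambda\le C_R$ deterministically on $[0,\tau_R]$, we can take expectations: the martingale vanishes in mean, since its integrand is bounded thanks to the localization. This yields $\mathbb E\sup_{t\le T}e^{-\Lambda(t\wedge\tau_R)}\|w(t\wedge\tau_R)\|^2=0$, using BDG \eqref{BDG} to absorb the supremum of the martingale as in the contraction step of Theorem \ref{thm_Local}. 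Hence $w=0$ on $[0,\tau_R]$ a.s. for every $T$ and $R$. Letting $R\to\infty$ and $T\to\infty$, and using time-continuity of $u_{N_1}$ and $u_{N_2}$ in $L^2(\mathcal O)$, gives the claim on $[0,\tau)$.
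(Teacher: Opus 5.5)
\textbf{Gap in the $u_{N_2}\nabla z$ step.} Your architecture is the paper's: localize so that $\|u_{N_i}\|_{X_t}$ is bounded, apply It\^o to $\|w\|_{L^2}^2$, split into $w\nabla v_1+u_{N_2}\nabla z$, treat the noise with \eqref{sigma_2}, and close with a stochastic Gronwall argument. However, the step producing $\lambda(s)\int_0^s\|w\|_{L^2}^2\,dr$ fails. After H\"older you have $\|u_{N_2}(s)\|_{H^1}^2\|\nabla z(s)\|_{L^q}^2$, and you put $\|u_{N_2}\|_{H^1}^2$ into $\lambda$. That factor is only integrable in time, so you need a \emph{pointwise-in-time} bound $\|\nabla z(s)\|_{L^q}^2\le C\int_0^s\|w\|_{L^2}^2\,dr$.

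Lemma \ref{lem_v}(ii) gives only the time-integrated inequality $\int_0^t\|\nabla z\|_{L^q}^2\,ds\le C\int_0^t\|w\|_{L^2}^2\,ds$. That cannot be paired with a weight that is merely $L^1$ in time. The pointwise bound is in fact false for $q>2$. Forcing in $L^2(0,s;L^2(\mathcal O))$ only places $z(s)$ in the trace space $H^1(\mathcal O)$, so $\nabla z(s)$ lies in $L^2$ but in general not in $L^q$. The energy estimate does give the pointwise bound for $q=2$, but then $q'=\infty$ and $H^1\not\hookrightarrow L^\infty$ in two dimensions.

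\textbf{How to repair it.} Bound $\nabla z$ pointwise by the running supremum of $w$, which is the form your Gronwall step actually uses. From \eqref{gad_q} with $m=2$, $\|\nabla z(s)\|_{L^q}\le C\int_0^s\big(1+(s-r)^{-(1-1/q)}\big)\|w(r)\|_{L^2}\,dr\le C(s+s^{1/q})\sup_{r\le s}\|w(r)\|_{L^2}$.

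The paper avoids the issue differently. It pairs $\|u_{N_2}\|_{L^4}\|\nabla z\|_{L^4}$ and uses H\"older in time. Ladyzhenskaya gives $\int\|u_{N_2}\|_{L^4}^4\le C(T,M)$, and Lemma \ref{lem_v}(ii) with $q=\gamma=4$, $m=2$ gives $\int\|\nabla z\|_{L^4}^4\le C\int\|w\|_{L^2}^4$. Then $(\int\|w\|_{L^2}^4)^{1/2}\le\frac14\sup\|w\|_{L^2}^2+C\int\|w\|_{L^2}^2$. Everything stays in integrated form, so the Glatt-Holtz--Ziane stochastic Gronwall lemma applies directly.

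\textbf{Making the closing argument precise.} With your repair, the Gronwall step still needs care.
\begin{itemize}
\item The weight $e^{-\Lambda}$ removes only the local terms, not the nonlocal one.
\item Splitting into deterministic subintervals does not make $\int\lambda$ small, because $\lambda$ is random.
\item What works is induction over $[kh,(k+1)h]$. If $w=0$ on $[0,kh\wedge\tau_R]$, then $z$ restarts from $0$ at $kh$. With $Y:=e^{-K\Lambda}\|w\|_{L^2}^2$ and $\Lambda$ nondecreasing, the nonlocal term is at most $Ch^{2/q}C_R\sup Y$, which is absorbed for small $h$.
\item Take $K$ large, or take expectations before the supremum, so that the term $\int\lambda Y$ reintroduced by BDG through the It\^o correction is also controlled.
\end{itemize}
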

   \begin{proof}
      For each $M>0$, define
\[
\tau_M^i:=\inf\{t\ge0:\|u_{N_i}\|_{X_t}^2\ge M\}\wedge\tau_{N_i}, \qquad i=1,2.
\]
       Let $\tau_M=\tau_M^1\wedge\tau_M^2$ and $w=u_{N_1}-u_{N_2}$.   Let $0\leq \tau_a\le \tau_b\leq T\wedge{\tau_{M}}$
be arbitrary stopping times. 
By the definition of  local  solution, for every $t\in[\tau_a,\tau_b]$, we have, 
\begin{align*}
    w(t)=&w(\tau_a)+\int_{\tau_a}^t\Delta w(s)\,ds- \int_{\tau_a}^t\nabla\cdot\big(w(s)\nabla v_1(s)+{u_{N_2}(s)}\nabla\big(v_1(s)-v_2(s)\big)\big)\,ds\\&+\int_{\tau_a}^t\big(\sigma({u_{N_1}}(s))-\sigma({u_{N_2}(s)})\big)dW(s).
\end{align*}
Applying It\^o formula (\cite{liu2015stochastic}, Theorem 4.2.5) to $\|w(t)\|_{L^2(\mathcal {O})}^2$ for $t\in[\tau_a,\tau_b]$, we get 
\begin{align}\label{w_1}
    \nonumber&\|w(t)\|_{L^2(\mathcal {O})}^2-\|w(\tau_a)\|_{L^2(\mathcal {O})}^2\\
    \nonumber=&-2\int_{\tau_a}^t\big(\nabla w(s),\nabla w(s)\big)_{L^2(\mathcal {O})}\,ds\\\nonumber&+2 \int_{\tau_a}^t\big(\nabla w(s),w(s)\nabla v_1(s)+{u_{N_2}}(s)\nabla (v_1(s)-v_2(s))\big)_{L^2(\mathcal {O})}\,ds\\
    \nonumber&+2\int_{\tau_a}^t\big(w(s),\big(\sigma({u_{N_1}}(s))-\sigma({u_{N_2}(s)})\big)dW(s)\big)_{L^2(\mathcal {O})}\\\nonumber&+\int_{\tau_a}^t\|\sigma({u_{N_1}}(s))-\sigma({u_{N_2}(s)})\|_{L_2(U,L^2(\mathcal {O}))}^2\,ds\\
    \nonumber\leq&-2\int_{\tau_a}^t\|\nabla w(s)\|_{L^2(\mathcal {O})}^2\,ds+2 \int_{\tau_a}^t\|\nabla w(s)\|_{L^2(\mathcal {O})}\| w(s)\|_{L^2(\mathcal {O})}\|\nabla v_1(s)\|_{L^\infty(\mathcal {O})}\,ds\\&\nonumber+2 \int_{\tau_a}^t\|\nabla w(s)\|_{L^2(\mathcal {O})}\|{u_{N_2}}(s)\|_{L^4(\mathcal {O})}\|\nabla\big(v_1(s)-v_2(s)\big)\|_{L^{4}(\mathcal {O})}\,ds\\
\nonumber&+2\int_{\tau_a}^t\big(w(s),\big(\sigma({u_{N_1}}(s))-\sigma({u_{N_2}(s)})\big)dW(s)\big)_{L^2(\mathcal {O})}\\&+\int_{\tau_a}^t\|\sigma({u_{N_1}}(s))-\sigma({u_{N_2}(s)})\|_{L_2(U,L^2(\mathcal {O}))}^2\,ds.
\end{align}
 By Young's inequality,  there exists $\varepsilon_1>0$ such that
\begin{align}\label{w_2}
    \nonumber&2 \int_{\tau_a}^t\|\nabla w(s)\|_{L^2(\mathcal {O})}\| w(s)\|_{L^2(\mathcal {O})}\|\nabla v_1(s)\|_{L^\infty(\mathcal {O})}\,ds\\
    \le&C(\varepsilon_1)\int_{\tau_a}^t\|\nabla v_1(s)\|_{L^\infty(\mathcal {O})}^2\| w(s)\|_{L^2(\mathcal {O})}^2\,ds+\varepsilon_1\int_{\tau_a}^t\|\nabla w(s)\|_{L^2(\mathcal {O})}^2\,ds.
\end{align}
Since $t\leq\tau_b\leq\tau_M$, the definition of $\tau_M$
implies that  $\|{u_{N_1}}\|_{X_t}^2,\|{u_{N_2}}\|_{X_t}^2\le M$, then the Gagliardo-Nirenberg inequality yields \begin{align}\label{u_4}
&\nonumber\Big(\int_{\tau_a}^t \|u_{N_2}(s)\|_{L^4(\mathcal {O})}^4\,ds\Big)^{1/2}\\
\nonumber\le&
C\Big(\int_{0}^t \|u_{N_2}(s)\|_{L^2(\mathcal {O})}^2
\|\nabla u_{N_2}(s)\|_{L^2(\mathcal {O})}^2\,ds+\int_{0}^t \|u_{N_2}(s)\|_{L^2(\mathcal {O})}^4
\,ds\Big)^{1/2}\\\nonumber
\le&
C\Big(\sup_{s\in[{0},t]}\|u_{N_2}(s)\|_{L^2(\mathcal {O})}^2\Big)^{1/2}
\Big(\int_{0}^t \|\nabla u_{N_2}(s)\|_{L^2(\mathcal {O})}^2\,ds\Big)^{1/2}+C(T)\Big(\sup_{s\in[{0},t]}\|u_{N_2}(s)\|_{L^2(\mathcal {O})}^2\Big)\\
\le&
C(T)\Big(
\sup_{s\in[{0},t]}\|u_{N_2}(s)\|_{L^2(\mathcal {O})}^2
+
\int_{0}^t \|\nabla u_{N_2}(s)\|_{L^2(\mathcal {O})}^2\,ds
\Big)\le C(T,M).
\end{align}
Using Young's inequality, Lemma \ref{lem_v} and   \eqref{u_4}, we have that for any $\varepsilon_2>0$, 
\begin{align}\label{w_3}
  \nonumber& 2 \int_{\tau_a}^t\|\nabla w(s)\|_{L^2(\mathcal {O})}\|{u_{N_2}}(s)\|_{L^4(\mathcal {O})}\|\nabla\big(v_1(s)-v_2(s)\big)\|_{L^{4}(\mathcal {O})}\,ds\\
  \nonumber\le & 2\Big(\int_{\tau_a}^t\|\nabla w(s)\|_{L^2(\mathcal {O})}^2\,ds\Big)^\frac{1}{2}\Big(\int_{\tau_a}^t\| u_{N_2}(s)\|_{L^4(\mathcal {O})}^2\|\nabla \big(v_1(s)-v_2(s)\big)\|_{L^4(\mathcal {O})}^2\,ds\Big)^\frac{1}{2}\\\nonumber\le &C(\varepsilon_2)\Big(\int_{\tau_a}^t\| u_{N_2}(s)\|_{L^4(\mathcal {O})}^4ds\Big)^{\frac{1}{2}}\Big(\int_{\tau_a}^t\|\nabla\big(v_1(s)-v_2(s)\big)\|_{L^{4}(\mathcal {O})}^4ds\Big)^{\frac{1}{2}}+\varepsilon_2 \int_{\tau_a}^t\|\nabla w(s)\|_{L^2(\mathcal {O})}^2ds\\
  \nonumber
    \le & C(\varepsilon_2,M)\Big(\int_{\tau_a}^t\|w(s)\|_{L^2(\mathcal {O})}^4\,ds\Big)^\frac{1}{2}+C\|v_1(\tau_a)-v_2(\tau_a)\|_{H^1(\mathcal{O})}^2+\varepsilon_2\int_{\tau_a}^t\|\nabla w(s)\|_{L^2(\mathcal {O})}^2\,ds\\\nonumber 
    \le &\frac{1}{4}\sup_{s\in[{\tau_a},t]}\|w(s)\|_{L^2(\mathcal {O})}^2+C(\varepsilon_2,M)\int_{\tau_a}^t\| w(s)\|_{L^2(\mathcal {O})}^2\,ds+C\|v_1(\tau_a)-v_2(\tau_a)\|_{H^1(\mathcal{O})}^2\\&+\varepsilon_2\int_{\tau_a}^t\|\nabla w(s)\|_{L^2(\mathcal {O})}^2\,ds.
\end{align}
Hence, taking $\varepsilon_1+\varepsilon_2<\frac{1}{2}$, substituting (\ref{w_2}) and (\ref{w_3}) into (\ref{w_1}), we obtain
\begin{align}\label{w_4}
   \nonumber &\|w(t)\|_{L^2(\mathcal {O})}^2-\|w({\tau_a})\|_{L^2(\mathcal {O})}^2+\int_{\tau_a}^t\|\nabla w(s)\|_{L^2(\mathcal {O})}^2\,ds-C\|v_1(\tau_a)-v_2(\tau_a)\|_{H^1(\mathcal{O})}^2\\\nonumber
   \leq&\frac{1}{4}\sup_{s\in[{\tau_a},t]}\|w(s)\|_{L^2(\mathcal {O})}^2+C(M)\int_{\tau_a}^t\big(1+\|\nabla v_1(s)\|_{L^\infty(\mathcal {O})}^2\big)\| w(s)\|_{L^2(\mathcal {O})}^2\,ds\\\nonumber&+2\int_{\tau_a}^t\big(w(s),\big(\sigma({u_{N_1}(s)})-\sigma({u_{N_2}(s)})\big)dW(s)\big)_{L^2(\mathcal {O})}\\&+\int_{\tau_a}^t
\big\|\sigma({u_{N_1}(s)})-\sigma({u_{N_2}(s)}) \big\|_{L_2(U,L^2(\mathcal O))}^2\,ds.
\end{align}
For the parabolic equation \eqref{equ-v_t}, set $z:=v_1-v_2$.
Applying the $H^1(\mathcal{O})$-energy estimate to the equation satisfied by $z$, and then using the elliptic regularity estimate associated
with the boundary condition,
we obtain that for every \(t\in[\tau_a,\tau_b]\),
\begin{align}\label{est_V}
\|z(t)\|_{H^1(\mathcal O)}^2
+
c\int_{\tau_a}^{t}
\|z(s)\|_{H^2(\mathcal O)}^2\,ds
\leq
\|z(\tau_a)\|_{H^1(\mathcal O)}^2
+
C\int_{\tau_a}^{t}
\|w(s)\|_{L^2(\mathcal O)}^2\,ds,
\end{align}
where \(c>0\) and \(C>0\) are independent of
\(\tau_a,\tau_b\). Furthermore, combining \eqref{w_4} and \eqref{est_V},  it follows that
\begin{align}\label{w_6}
  \nonumber& \|w(t)\|_{L^2(\mathcal {O})}^2+\int_{\tau_a}^t\|\nabla w(s)\|_{L^2(\mathcal {O})}^2\,ds+\|z(t)\|_{H^1(\mathcal O)}^2\\ \nonumber
\leq&\|w({\tau_a})\|_{L^2(\mathcal {O})}^2+
C\|z(\tau_a)\|_{H^1(\mathcal O)}^2
+\frac{1}{4}\sup_{s\in[{\tau_a},t]}\|w(s)\|_{L^2(\mathcal {O})}^2\\ \nonumber&+C(M)\int_{\tau_a}^t\big(1+\|\nabla v_1(s)\|_{L^\infty(\mathcal {O})}^2\big)\| w(s)\|_{L^2(\mathcal {O})}^2\,ds\\ \nonumber&+2\int_{\tau_a}^t\big(w(s),\big(\sigma({u_{N_1}(s)})-\sigma({u_{N_2}(s)})\big)dW(s)\big)_{L^2(\mathcal {O})}\\&+\int_{\tau_a}^t
\big\|\sigma({u_{N_1}(s)})-\sigma({u_{N_2}(s)}) \big\|_{L_2(U,L^2(\mathcal O))}^2\,ds.
\end{align}
     Taking the supremum over $t\in[{\tau_a},{\tau_b}]$ and the expectation on both sides of \eqref{w_6}, then 
      using the
Burkholder-Davis-Gundy inequality, and Young's inequality to estimate the stochastic integral term, we get
\begin{align}\label{M_BDG}
    \nonumber &2\mathbb{E}\Big(\sup_{t\in[{\tau_a},{\tau_b}]}\Big|\int_{\tau_a}^{t}\big(w(s), \big(\sigma({u_{N_1}}(s))-\sigma({u_{N_2}(s)})\big)dW(s)\big)_{L^2(\mathcal {O})}\Big|\Big)\\\nonumber
\leq&C\mathbb{E}\Big(\int_{{\tau_a}}^{{\tau_b}}\|w(t)\|_{L^2(\mathcal {O})}^2\|\sigma({u_{N_1}}(t))-\sigma({u_{N_2}}(t))\|_{L_2(U,L^2(\mathcal {O}))}^2dt\Big)^\frac{1}{2}\\\nonumber
\leq&C\mathbb{E}\Big[\Big(\sup_{t\in[{\tau_a},{\tau_b}]}\|w(t)\|_{L^2(\mathcal {O})}^2\Big)^\frac{1}{2}\Big(\int_{{\tau_a}}^{{\tau_b}}\|\sigma ({u_{N_1}}(t))-\sigma ({u_{N_2}}(t))\|_{L_2(U,L^2(\mathcal {O}))}^2dt\Big)^\frac{1}{2}\Big]\\
    \leq&\frac{1}{4}\mathbb{E}\Big(\sup_{t\in[\tau_a,\tau_b]}\|w(t)\|_{L^2(\mathcal {O})}^2\Big)+C\mathbb{E}\Big(\int_{{\tau_a}}^{{\tau_b}}\|\sigma ({u_{N_1}}(t))-\sigma ({u_{N_2}}(t))\|_{L_2(U,L^2(\mathcal {O}))}^2dt\Big).
\end{align}
Applying Lemma \ref{lem_sigma}, we  find that for every $\varepsilon>0$,
\begin{align}\label{M_sigma}
    \nonumber&\int_{{\tau_a}}^{{\tau_b}}
\big\|\sigma({u_{N_1}(s)})-\sigma({u_{N_2}(s)}) \big\|_{L_2(U,L^2(\mathcal O))}^2\,ds\\\nonumber \le& \int_{{\tau_a}}^{{\tau_b}}C_\varepsilon({u_{N_1}},{u_{N_2}})\|{u_{N_1}(s)}-{u_{N_2}(s)}\|_{L^2(\mathcal O)}^2\,ds+\varepsilon\int_{{\tau_a}}^{{\tau_b}}\|\nabla{u_{N_1}(s)}-\nabla{u_{N_2}(s)}\|_{L^2(\mathcal O)}^2\,ds\\\le& C(M,\varepsilon)\int_{{\tau_a}}^{{\tau_b}}\|w(s)\|_{L^2(\mathcal O)}^2\,ds+\varepsilon\int_{{\tau_a}}^{{\tau_b}}\|\nabla w(s)\|_{L^2(\mathcal O)}^2\,ds.
\end{align}
 Combining \eqref{M_BDG} and  \eqref{M_sigma}, we obtain 
      \begin{align*}
&\mathbb{E}\Big(\sup_{t\in[\tau_a,\tau_b]}\|w({t})\|_{L^2(\mathcal {O})}^2\Big)+ 2\mathbb{E}\Big(\int_{{\tau_a}}^{{\tau_b}}\|\nabla w(t)\|_{L^2(\mathcal {O})}^2dt\Big)+C\mathbb{E}\Big(\sup_{t\in[\tau_a,\tau_b]}\|z({t})\|_{H^1(\mathcal {O})}^2\Big)\\\leq &\mathbb{E}\|w(\tau_a)\|_{L^2(\mathcal{O})}^2+C\mathbb{E}\|z(\tau_a)\|_{H^1(\mathcal{O})}^2+C\mathbb{E}\Big(\int_{{\tau_a}}^{{\tau_b}}\big(1+\|\nabla v_1(t)\|_{L^\infty (\mathcal{O})}^2\big)\|w(t)\|_{L^2(\mathcal {O})}^{2}dt\Big).
\end{align*}
Moreover, by Lemma \ref{lem_v} and the definition of $\tau_M$, we have 
\begin{align*}
    \int_{0}^{T\wedge\tau_M}\big(1+\|\nabla v_1(t)\|_{L^\infty (\mathcal{O})}^2\big)dt\le C(T,M,v_0),\qquad\mathbb{P}\text{-a.s.}
\end{align*}
Therefore, applying stochastic Gronwall's lemma \cite [Lemma 5.3]{glatt2009strong}, and since $u_{N_1}(0)=u_{N_2}(0)$ almost surely, $z(0)=0$,  we infer 
\[
\mathbb{E}\Big(\sup_{t\in[0,T\wedge\tau_M]}\|w({t})\|_{L^2(\mathcal {O})}^2\Big)+\mathbb{E}\Big(\int_0^{T\wedge{\tau_M}}\|\nabla w(t)\|_{L^2(\mathcal {O})}^2dt\Big)=0,
\]
which implies that 
\begin{align}\label{T_0}
    \mathbb{P}\Big(u_{N_1}(t)=u_{N_2}(t)\ \text{for all } t\in[0,T\wedge{\tau_M}]\Big)=1.
\end{align}

Since $T>0$ is arbitrary, it follows that
\[
\mathbb{P}\Big(
u_{N_1}(t)=u_{N_2}(t)\ \text{for all } t\in[0,\tau_M]
\Big)=1.
\]
Finally, let
\[
A_M:=\Big\{
u_{N_1}(t)=u_{N_2}(t)\ \text{for all } t\in[0,\tau_M]
\Big\}.
\]
Then $\mathbb{P}(A_M)=1$ for every $M\in\mathbb{N}$, and hence
\[
\mathbb{P}\Big(\bigcap_{M=1}^\infty A_M\Big)=1.
\]
Fix $\omega\in \bigcap_{M=1}^\infty A_M$ and let $t<\tau(\omega)$. Since $\tau_M(\omega)\uparrow\tau(\omega)$ as $M\to\infty$, there exists $M$ sufficiently large such that $t\le \tau_M(\omega)$. Therefore,
\[
u_{N_1}(t,\omega)=u_{N_2}(t,\omega).
\]
This proves that
\[
\mathbb{P}\Big(
u_{N_1}(t)=u_{N_2}(t)\ \text{for all } t\in[0,\tau)
\Big)=1.
\]
   \end{proof}
   We next establish the existence of a maximal strong solution to \eqref{equ1}. The argument follows ideas similar to those in \cite[Theorem~3.28]{kuehn2020pathwise} and \cite[Section~4]{chen2025well}. The result is formulated in the following theorem.  
\begin{thm}(Maximal existence)
    Under Assumption \ref{ass}, the system \eqref{equ1}  admits a unique maximal solution $(u,\tau^*)$ in the sense of Definition \ref{def_max}.
\end{thm}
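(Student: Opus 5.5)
We build the maximal solution by gluing the truncated solutions from Theorem~\ref{thm_Local}, using pathwise uniqueness (Theorem~\ref{thm3.2}) to make the gluing consistent. For each $N\in\mathbb N$, let $u_N$ be the global solution of the truncated system \eqref{equ-u} on the fixed stochastic basis. Let
\[
\tau_N:=\inf\{t\ge0:\|u_N\|_{X_t}^2\ge N\}
\]
as in the proof of Theorem~\ref{thm_Local}, so that $(u_N,\tau_N)$ is a local strong solution of \eqref{equ1}.

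\textbf{Consistency.} Apply Theorem~\ref{thm3.2} to $(u_N,\tau_N)$ and $(u_{N+1},\tau_{N+1})$, which share the data $u_0$, $v_0$. It gives $u_N=u_{N+1}$ on $[0,\tau_N\wedge\tau_{N+1})$ almost surely. By continuity of the paths in $L^2(\mathcal O)$, also $\|u_N\|_{X_t}^2=\|u_{N+1}\|_{X_t}^2$ there. Since the map $t\mapsto\|u_N\|_{X_t}^2$ is continuous and nondecreasing, this forces $\tau_N\le\tau_{N+1}$ a.s. Indeed, on $\{\tau_{N+1}<\tau_N\}$ we would have $\|u_N\|_{X_{\tau_{N+1}}}^2=\|u_{N+1}\|_{X_{\tau_{N+1}}}^2\ge N+1>N$, which contradicts the definition of $\tau_N$.

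\textbf{Gluing.} Discard a null set, obtained as a countable union over $N$, on which consistency fails. Set $\tau^*:=\lim_N\tau_N$ and define $u(t):=u_N(t)$ for $t\in[0,\tau_N]$. This is well defined, continuous and adapted, and $u(\cdot\wedge\tau_N)=u_N(\cdot\wedge\tau_N)$. Hence each $(u,\tau_N)$ inherits the regularity and the weak formulation of Definition~\ref{def_local}. The companion $v=v[u]$ is built pathwise from the linear equation \eqref{equ-v_t}. It is likewise consistent with $v[u_N]$ on $[0,\tau_N]$, by uniqueness for the linear heat equation and Lemma~\ref{lem_v}.

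\textbf{Blow-up criterion.} On $\{\tau^*<\infty\}$ the $\tau_N$ are finite. For $t$ slightly beyond each $\tau_N$ we have $\|u\|_{X_{\tau_N}}^2=\|u_N\|_{X_{\tau_N}}^2=N$, by continuity of $t\mapsto\|u_N\|_{X_t}^2$. Here one must check that $\tau_N<\infty$ does mean the level $N$ is attained. This holds because $u_N$ exists on every $[0,T]$, so the hitting time is a genuine attainment. Therefore $\limsup_{t\uparrow\tau^*}\|u\|_{X_t}^2\ge\lim_N N=\infty$.

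\textbf{Uniqueness of the maximal solution.} Let $(\tilde u,\tilde\tau^*)$ be another maximal solution with localizing sequence $\tilde\tau_N$. Theorem~\ref{thm3.2} gives $u=\tilde u$ on $[0,\tau_N\wedge\tilde\tau_M)$ for all $N,M$, hence on $[0,\tau^*\wedge\tilde\tau^*)$. Suppose $\tilde\tau^*>\tau^*$ with positive probability. On that event $\tau^*<\infty$ and $\tilde u$ is a continuous solution beyond $\tau^*$, so $\|\tilde u\|_{X_{\tau^*}}<\infty$. This contradicts the blow-up of $\|u\|_{X_t}$ as $t\uparrow\tau^*$. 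The symmetric argument excludes $\tau^*>\tilde\tau^*$, so $\tau^*=\tilde\tau^*$ a.s. and the two solutions coincide.

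\textbf{Main obstacle.} The delicate point is the monotonicity $\tau_N\le\tau_{N+1}$ together with the identification of the two stopping-time sequences. Theorem~\ref{thm3.2} only gives agreement on the open interval $[0,\tau)$. I would first extend it to the closed interval $[0,\tau]$ via path continuity in $L^2$ and continuity of $\int_0^t\|\nabla u\|^2$. I would then argue pathwise with the continuous functional $t\mapsto\|u\|_{X_t}^2$, as above.
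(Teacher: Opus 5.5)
Your proof is correct and follows the paper's route: glue the truncated solutions $u_N$ using pathwise uniqueness, get $\tau_N\le\tau_{N+1}$ from the same contradiction at the smaller hitting time, and read off the blow-up criterion from the hitting-time definition of $\tau_N$. You also make explicit the continuity extension of agreement from $[0,\tau)$ to $[0,\tau]$, and you give the comparison argument for uniqueness of the maximal solution via the blow-up criterion; the paper uses the first only implicitly and omits the second, even though the statement asserts uniqueness.
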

\begin{proof}
For each $N\in\mathbb N$, let $(u_N,\tau_N)$ be the local strong solution to \eqref{equ1}, where
\[
\tau_N:=\inf\{t\ge0:\|u_N\|_{X_t}^2\ge N\}.
\] 
By the pathwise uniqueness of local strong solutions, for any $N_1<N_2$ we have
\[
u_{N_1}(t)=u_{N_2}(t)\qquad \text{for all } t\in[0,\tau_{N_1}\wedge{\tau_{N_2}}),\quad \mathbb P\text{-a.s.}
\]
We claim  that $\tau_{N_1}\le \tau_{N_2}$ almost surely. Indeed, by the definition of stopping times, on $\{\tau_{N_2}<\tau_{N_1}\}$,  we have
\[
N_2\le \|u_{N_2}\|_{X_{\tau_{N_2}}}^2=\|u_{N_1}\|_{X_{\tau_{N_2}}}^2<N_1,
\]
which contradicts $N_1<N_2$.
Hence $\{\tau_N\}_{N\in\mathbb N}$ is an increasing sequence of stopping times. Define
\[
\tau^*:=\lim_{N\to\infty}\tau_N.
\]
We now define a process $u$ on $[0,\tau^*)$ as follows. For 
	$t<\tau^*$, there exists $N\in\mathbb{N}$ such that $t<\tau_N$. We then  set
	\[
	u(t,\omega):=u_N(t,\omega).
	\]
	This definition is independent of the choice of $N$. Indeed, on 
	$t\in[0,\tau_{N_1}\wedge\tau_{N_2})$, pathwise uniqueness yields
	\[
	u_{N_1}(t,\omega)=u_{N_2}(t,\omega).
	\]
  	Thus $u$ is well defined on $[0,\tau^*)$. Moreover, for each fixed
	$N\in\mathbb N$,
	\[
	u(t)=u_N(t),
	\qquad 0\le t<\tau_N,
	\quad \mathbb P\text{-a.s.}
	\]
	Since $(u_N,\tau_N)$ is a local strong solution to \eqref{equ1}, it follows
	that $(u,\tau_N)$ is also a local strong solution to \eqref{equ1}.

	We next verify the blow-up criterion. For  $N\in\mathbb N$, by the definition
	of $\tau_N$, on the set $\{\tau_N<\infty\}$ one has
    \[
\limsup_{t\uparrow\tau_N}
\left[
\sup_{0\le s\le t}\|u_N(s)\|_{L^2(\mathcal O)}^2
+
\int_0^t\|\nabla u_N(s)\|_{L^2(\mathcal O)}^2\,ds
\right]\ge N,
\qquad\mathbb P\text{-a.s.}
\]
    Since $u=u_N$ on $[0,\tau_N)$, it follows that
	\[
\limsup_{t\uparrow\tau_N}
\left[
\sup_{0\le s\le t}\|u(s)\|_{L^2(\mathcal O)}^2
+
\int_0^t\|\nabla u(s)\|_{L^2(\mathcal O)}^2\,ds
\right]
\ge N
\quad\text{on }\{\tau_N<\infty\},
\qquad\mathbb P\text{-a.s.}
\]
By $\tau_N\le \tau^*$ and  $\{{\tau^*}<\infty\}\subset \{\tau_N<\infty\}$, and since \(N\) can be chosen arbitrarily
large, we conclude that
\[
\limsup_{t\uparrow\tau^*}
\left[
\sup_{0\le s\le t}\|u(s)\|_{L^2(\mathcal O)}^2
+
\int_0^t\|\nabla u(s)\|_{L^2(\mathcal O)}^2\,ds
\right]
=\infty
\quad\text{on }\{\tau^*<\infty\},
\qquad\mathbb P\text{-a.s.}
\]
    	Therefore, \((u,\tau^*)\) is a maximal local strong solution to
\eqref{equ1} in the sense of Definition~\ref{def_max}.
\end{proof}
\subsection{Uniform estimates}  We  now  established the estimates  for local strong solutions that are uniform in $N$.
  \begin{thm}\label{thm_estimate}
      Let $(u,\tau_N)$ be a local strong solution  of  system (\ref{equ1}). Under Assumption \ref{ass},  for every $T>0$ and every $\alpha\in\big(\max\{0,\frac{2-r}{2}\},\frac{1}{2}\big)$, there exists a constant $C(T)>0$, independent of $N$, such that 
      \begin{align}\label{est_C(T)}
	    \mathbb E\Big(\sup_{0\le t\le T}(1+\|u(t\wedge\tau_N)\|_{L^2(\mathcal{O})}^2)^\alpha\Big)+\mathbb{E}\Big(\int_0^{T\wedge{\tau_N}}
		\frac{\|\nabla u(s)\|_{L^2(\mathcal{O})}^2}{(1+\|u(s)\|_{L^2(\mathcal{O})}^2)^{1-\alpha}}\,ds\Big)
	\le C(T).
	\end{align}
  \end{thm}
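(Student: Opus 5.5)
The plan is to apply It\^o's formula to the concave Lyapunov functional $\Phi(\|u(t\wedge\tau_N)\|_{L^2(\mathcal O)}^2)$ with $\Phi(x)=(1+x)^\alpha$. This is legitimate because on $[0,\tau_N]$ we have $u\in C([0,T];L^2)\cap L^2(0,T;H^1)$ and $\nabla v\in L^2(0,T;L^\infty)$, so the It\^o formula for $\|u\|_{L^2}^2$ (\cite[Theorem 4.2.5]{liu2015stochastic}) can be composed with the $C^2$ function $\Phi$ on $[0,\infty)$.

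Writing $x=\|u\|_{L^2(\mathcal O)}^2$, the drift of $x$ is
\[
-2\|\nabla u\|_{L^2}^2+2(u\nabla v,\nabla u)+B_0^2\|u\|_{L^p}^{2r}x,
\]
and its quadratic variation density is $4B_0^2\|u\|_{L^p}^{2r}x^2$. Since $\Phi'(x)=\alpha(1+x)^{\alpha-1}$ and $\Phi''(x)=\alpha(\alpha-1)(1+x)^{\alpha-2}$, the It\^o correction combines with the noise trace into
\[
\alpha B_0^2\|u\|_{L^p}^{2r}\,x(1+x)^{\alpha-2}\big[1-(1-2\alpha)x\big].
\]
Because $\alpha<\frac12$, this is bounded above by
\[
C-c_\alpha\|u\|_{L^p}^{2r}(1+x)^{\alpha}.
\]
On $\{x\le K\}$ it is bounded by a constant, since $\|u\|_{L^p}^{2r}$ can be traded via \eqref{G-N} for $\eta\|\nabla u\|^2$ plus a power of $x$. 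This negative term $-c\|u\|_{L^p}^{2r}(1+x)^\alpha$ is the key damping. The dissipative term gives $-2\alpha(1+x)^{\alpha-1}\|\nabla u\|_{L^2}^2$, which is exactly the quantity appearing in \eqref{est_C(T)}.

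For the chemotactic term I will use H\"older with $\frac12=\frac1p+\frac1q$, i.e.\ $q=\frac{2p}{p-2}$:
\[
2(u\nabla v,\nabla u)\le \|\nabla u\|_{L^2}^2+\|u\|_{L^p}^2\|\nabla v\|_{L^q}^2.
\]
The first piece is absorbed into half the dissipation. For the second I apply Young with exponents $r$ and $r'=\frac r{r-1}$:
\[
(1+x)^{\alpha-1}\|u\|_{L^p}^2\|\nabla v\|_{L^q}^2\le \delta\|u\|_{L^p}^{2r}(1+x)^{\alpha}+C_\delta(1+x)^{\alpha-1-\frac{1}{r-1}}\|\nabla v\|_{L^q}^{2r'}.
\]
The first piece is absorbed by the noise damping. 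The remaining term is controlled through Lemma \ref{lem_v}(ii), applied with $g=u$, $m$ close to $2$, $q$ as above and $\gamma=2r'$, which is admissible since $\frac1q>0\ge\frac1m-\frac12$. This bounds $\int_0^{t\wedge\tau_N}\|\nabla v\|_{L^q}^{\gamma}$ by $C(T)\|\nabla v_0\|_{L^\infty}^\gamma$ plus $\int_0^{t\wedge\tau_N}\|u\|_{L^m}^{\gamma}$. I expect this to be the main obstacle, because the $v$-bound is nonlocal in time while the negative weight $(1+x)^{\alpha-1-\frac1{r-1}}$ is evaluated at the current time.

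To handle it, I will interpolate $\|u\|_{L^m}$ between $\|u\|_{L^2}$ and $\|u\|_{L^p}$, or use Gagliardo--Nirenberg against $\|\nabla u\|_{L^2}$, so that the time integral is dominated by
\[
\int\Big(\|u\|_{L^p}^{2r}(1+x)^\alpha+(1+x)^{\alpha-1}\|\nabla u\|_{L^2}^2\Big)\,ds+C(T),
\]
with a small constant that can be absorbed. The lower bound $\alpha>\frac{2-r}{2}$, i.e.\ $2\alpha+r>2$, should be exactly what makes the power counting close, after possibly redistributing the Young exponents above. If this direct pathwise absorption fails, my fallback is to keep the bound in the weaker form $\sup(1+x)^{1-\alpha}\cdot\int(\ldots)$ and close it after taking expectations, using Young's inequality.

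Finally, I take the supremum over $[0,T]$ and expectations. The martingale term $2\alpha\int(1+x)^{\alpha-1}(u,\sigma(u)\,dW)$ has quadratic variation density
\[
4\alpha^2B_0^2(1+x)^{2\alpha-2}\|u\|_{L^p}^{2r}x^2\le C\|u\|_{L^p}^{2r}(1+x)^{\alpha}\,(1+x)^{\alpha}.
\]
By Burkholder--Davis--Gundy and Young, its expected supremum is at most
\[
\tfrac12\mathbb E\sup_{t\le T}(1+x(t\wedge\tau_N))^\alpha+C\,\mathbb E\int_0^{T\wedge\tau_N}\|u\|_{L^p}^{2r}(1+x)^\alpha\,ds.
\]
The last integral is itself bounded by the damping term retained on the left-hand side. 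All constants depend only on $T$, $\|u_0\|_{L^2}$, $\|\nabla v_0\|_{L^\infty}$, $p$, $r$, $\alpha$ and $B_0$, and not on $N$. A Gronwall step, if lower-order terms $C(1+x)^\alpha$ remain, then yields \eqref{est_C(T)}.
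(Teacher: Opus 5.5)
Your skeleton matches the paper's: It\^o's formula for $(1+\|u\|_{L^2}^2)^\alpha$, the combination of the It\^o correction with the trace into $\alpha B_0^2\|u\|_{L^p}^{2r}x(1+x)^{\alpha-2}[1-(1-2\alpha)x]$, the Gagliardo--Nirenberg trade for bounded $x$, and the BDG/Young closure via $(1+x)^{2\alpha-2}x^2\le(1+x)^{2\alpha}$ are all as in the paper. The gap is the chemotactic term, which you flag as the main obstacle and leave open, and the split you do propose cannot be completed. The nonlocality in time is not the problem, since the weight $(1+x)^{\alpha-r'}\le 1$ can simply be dropped. The problem is power counting.

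With H\"older at exponent $p$ and Young with the pair $(r,r')$, Lemma~\ref{lem_v}(ii) leaves you with $C\int_0^{t\wedge\tau_N}\|u\|_{L^m}^{2r'}\,ds$ for some $m\ge 2$. This has to be absorbed by $\int\big(\|u\|_{L^p}^{2r}(1+x)^\alpha+(1+x)^{\alpha-1}\|\nabla u\|_{L^2}^2\big)\,ds$. Test it on spatially constant states $u\equiv c$ with $c\gg 1$. There the gradient term vanishes and every $L^s$ norm is comparable to $x^{1/2}$, so you need $x^{r'}\ll x^{r+\alpha}$. That means $(r-1)(r+\alpha)>r$, i.e.\ $r>\frac{2-\alpha+\sqrt{4+\alpha^2}}{2}>\frac74$. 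Assumption~\ref{ass}, however, allows every $r\in(1,\frac{p}{p-2})$. For instance, $p=6$ forces $r<\frac32$, hence $r'>3>2>r+\alpha$. No interpolation of $\|u\|_{L^m}$ and no Gagliardo--Nirenberg step can rescue this, because constants defeat both. Your expectation fallback fails as well, since the estimate only yields first moments of $\sup(1+x)^\alpha$ and of the damping integral. In particular, $\alpha>\frac{2-r}{2}$ is not the condition that would close your counting.

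What is missing is the correct redistribution, and this is the core of the paper's argument. The power of $\|\nabla v\|$ fed into Lemma~\ref{lem_v}(ii) must be strictly below $2(r+\alpha)$; the paper takes it to be $2(2-\alpha)$. Concretely, it applies H\"older as $\|u\|_{L^m}\|\nabla v\|_{L^{2m/(m-2)}}$ with $2<m\le\frac{(r+2)p}{p+r}$. It then uses Young with the pair $\big(\frac{2-\alpha}{1-\alpha},\,2-\alpha\big)$, putting the whole weight on the $u$-factor:
\[
\frac{\|u\|_{L^m}^2\|\nabla v\|_{L^{2m/(m-2)}}^2}{(1+x)^{1-\alpha}}\le C\frac{\|u\|_{L^m}^{2(2-\alpha)/(1-\alpha)}}{(1+x)^{2-\alpha}}+\|\nabla v\|_{L^{2m/(m-2)}}^{2(2-\alpha)}.
\]
The first term is absorbed by the damping $\frac{\|u\|_{L^p}^{2r}x^2}{(1+x)^{2-\alpha}}$ after interpolating $L^m$ between $L^2$ and $L^p$ with $\theta=\frac{r}{r+2}$. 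This works because $\frac{2-\alpha}{(1-\alpha)(r+2)}<1$ if and only if $\alpha<\frac{r}{r+1}$. Lowering the Lebesgue index from $p$ to $m$ is exactly what lets the $u$-factor carry this large power. The second term, via Lemma~\ref{lem_v}(ii) with $\gamma=2(2-\alpha)$, becomes $\int\|u\|_{L^m}^{2(2-\alpha)}$. For $x>1$ this is absorbed by interpolating with $\theta=\frac{r}{r+\alpha}$, using $\frac{2-\alpha}{r+\alpha}<1$, which holds if and only if $\alpha>\frac{2-r}{2}$. This is precisely where the lower bound on $\alpha$ enters.
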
 

  \begin{proof}
    	 Applying It\^o's formula to $\|u(t\wedge{\tau_N})\|_{L^2(\mathcal O)}^2$, for convenience, we denote the $L^2(\mathcal{O})$ norm of $u$ and $\nabla u$  simply by $\|u\|_2$ and $\|\nabla u\|_2$ respectively and we obtain
        \begin{align}\label{es_L2}
		\nonumber&\|u(t\wedge{\tau_N})\|_2^2-\|u_0\|_2^2\\
		=&2\int_0^{t\wedge{\tau_N}}\big(u(s),\Delta u(s)\big)_2\,ds
		-2\int_0^{t\wedge{\tau_N}}\big(u(s),\nabla\cdot(u(s)\nabla v(s))\big)_2\,ds
		\nonumber\\
&+B_0^2\int_0^{t\wedge{\tau_N}}\|u(s)\|_p^{2r}\|u(s)\|_2^2\,ds+2\sum_{k=1}^\infty\int_0^{t\wedge{\tau_N}}b_k\|u(s)\|_p^r\|u(s)\|_2^2dW_k(s).
	\end{align}
  We introduce the Lyapunov function $\Phi(x)=(1+x)^\alpha$ with $\alpha\in\big(\max\{0,\frac{2-r}{2}\},\frac{1}{2}\big).$ Applying It\^o's formula to $\Phi(\|u(t\wedge{\tau_N})\|_2^2)$ and combining \eqref{es_L2} yields
	\begin{align*}
		\nonumber&\big(1+\|u(t\wedge{\tau_N})\|_2^2\big)^\alpha
+2\alpha\int_0^{t\wedge{\tau_N}}\frac{\|\nabla u(s)\|_2^2}{(1+\|u(s)\|_2^2)^{1-\alpha}}\,ds
		\\\nonumber
		=&(1+\|u_0\|_2^2)^\alpha
		+2\alpha\int_0^{t\wedge{\tau_N}}
		\frac{\int_{\mathcal O}u(s)\nabla u(s)\cdot \nabla v(s)dx}
		{(1+\|u(s)\|_2^2)^{1-\alpha}}\,ds
+\alpha\int_0^{t\wedge{\tau_N}}
		\frac{B_0^2\|u(s)\|_p^{2r}\|u(s)\|_2^2}
		{(1+\|u(s)\|_2^2)^{1-\alpha}}\,ds
		\\
		&-2\alpha(1-\alpha)\int_0^{t\wedge{\tau_N}}
		\frac{{B_0^2}\|u(s)\|_p^{2r}\|u(s)\|_2^4}
		{(1+\|u(s)\|_2^2)^{2-\alpha}}\,ds
+2\alpha\sum_{k=1}^\infty\int_0^{t\wedge{\tau_N}}
		\frac{b_k\|u(s)\|_p^r\|u(s)\|_2^2}
		{(1+\|u(s)\|_2^2)^{1-\alpha}}dW_k(s).
	\end{align*}
    Taking $2<m\le \frac{(r+2)p}{p+r}$ and  by H\"older's and Young's inequalities,  we get  for any $\varepsilon_1>0$,
	\begin{align}\label{ine_v}
	    2\alpha\int_{\mathcal O}u\nabla u\cdot \nabla v\,dx
	\le
	\varepsilon_1\|\nabla u\|_2^2
	+C\|u\|_m^2\|\nabla v\|_{\frac{2m}{m-2}}^2.
	\end{align}
	Substituting \eqref{ine_v} into the above equality, we obtain
	\begin{align}
		\label{L3}
		\nonumber&(1+\|u(t\wedge{\tau_N})\|_2^2)^\alpha
		+(2\alpha-\varepsilon_1)\int_0^{t\wedge{\tau_N}}\frac{\|\nabla u(s)\|_2^2}{(1+\|u(s)\|_2^2)^{1-\alpha}}\,ds\\\nonumber&+(\alpha-2\alpha^2)\int_0^{t\wedge{\tau_N}}
		\frac{{B_0^2}\|u(s)\|_p^{2r}\|u(s)\|_2^4}
		{(1+\|u(s)\|_2^2)^{2-\alpha}}\,ds
		\nonumber\\
		\le&(1+\|u_0\|_2^2)^\alpha
		+C\int_0^{t\wedge{\tau_N}}
		\frac{\|u(s)\|_m^2\|\nabla v(s)\|_{\frac{2m}{m-2}}^2}
		{(1+\|u(s)\|_2^2)^{1-\alpha}}\,ds
		+\alpha\int_0^{t\wedge{\tau_N}}
		\frac{{B_0^2}\|u(s)\|_p^{2r}\|u(s)\|_2^2}
		{(1+\|u(s)\|_2^2)^{2-\alpha}}\,ds
		\nonumber\\
		&
+2\alpha\sum_{k=1}^\infty\int_0^{t\wedge{\tau_N}}
		\frac{b_k\|u(s)\|_p^r\|u(s)\|_2^2}
		{(1+\|u(s)\|_2^2)^{1-\alpha}}dW_k(s).
	\end{align}
    Next, we use the Gagliardo-Nirenberg inequality, the interpolation inequality, and Young's inequality to estimate the  integral terms on the right-hand side in \eqref{L3} so that they can be absorbed into the left-hand side in \eqref{L3}. 
    
    We claim that for any $\varepsilon_2,\varepsilon_3>0$,
	\begin{equation}
		\label{es_p,2}
		\alpha{B_0^2}\|u\|_p^{2r}\|u\|_2^2
		\le
		\varepsilon_2\|\nabla u\|_2^2\|u\|_2^2
		+\varepsilon_3 \alpha{B_0^2}\|u\|_p^{2r}\|u\|_2^4
		+C(\varepsilon_2,\varepsilon_3,\alpha,{B_0^2}).
	\end{equation}
	Indeed,   if $\|u\|_2^2\ge \varepsilon_3^{-1}$, then
	\[
	\alpha{B_0^2}\|u\|_p^{2r}\|u\|_2^2
	\le
	\varepsilon_3 \alpha{B_0^2}\|u\|_p^{2r}\|u\|_2^4.
	\]
    If $\|u\|_2^2<\varepsilon_3^{-1}$, by the Gagliardo--Nirenberg inequality in two dimensions,
	\begin{align*}
		\|u\|_p^{2r}\|u\|_2^2
		&\le
		C\|\nabla u\|_2^{\frac{2r(p-2)}{p}}\|u\|_2^{\frac{2p+4r}{p}}
		+C\|u\|_2^{2r+2}\\
		&=
		C\|u\|_2^{\frac{2(p+4r-rp)}{p}}
		\big(\|\nabla u\|_2^2\|u\|_2^2\big)^{\frac{r(p-2)}{p}}
		+C\|u\|_2^{2r+2}.
	\end{align*}
	Since $p>2$ and $r<\frac{p}{p-2}$, we have
	\[
	\frac{r(p-2)}{p}<1,
	\qquad
	\frac{2(p+4r-rp)}{p}>0.
	\]
	 Therefore,
	\[
	\|u\|_p^{2r}\|u\|_2^2
	\le
	C(\varepsilon_3)\big(\|\nabla u\|_2^2\|u\|_2^2\big)^{\frac{r(p-2)}{p}}
	+C(\varepsilon_3),
	\]
	and Young's inequality yields
	\[
	\alpha{B_0^2}\|u\|_p^{2r}\|u\|_2^2
	\le
	\varepsilon_2\|\nabla u\|_2^2\|u\|_2^2
	+C(\varepsilon_2,\varepsilon_3,\alpha,{B_0^2}).
	\]
	Combining the above two cases, we obtain \eqref{es_p,2}.
	Furthermore, using
	\[
	\frac{\|u\|_2^2}{(1+\|u\|_2^2)^{2-\alpha}}
	\le
	\frac{1}{(1+\|u\|_2^2)^{1-\alpha}},
	\qquad
	\frac{1}{(1+\|u\|_2^2)^{2-\alpha}}\le 1,
	\]
	we infer that
	\begin{align}
		\label{es_a,q,2}
		\alpha\frac{{B_0^2}\|u\|_p^{2r}\|u\|_2^2}{(1+\|u\|_2^2)^{2-\alpha}}
		\le
		\varepsilon_2\frac{\|\nabla u\|_2^2}{(1+\|u\|_2^2)^{1-\alpha}}
		+\varepsilon_3 \alpha\frac{{B_0^2}\|u\|_p^{2r}\|u\|_2^4}{(1+\|u\|_2^2)^{2-\alpha}}
+C(\varepsilon_2,\varepsilon_3).
	\end{align}
	We next estimate the term involving $\|u\|_m^2\|\nabla v\|_{\frac{2m}{m-2}}^2$.
	By Young's inequality, we have
	\begin{align}
		\label{u-v-term}
		C\frac{\|u\|_m^2\|\nabla v\|_{\frac{2m}{m-2}}^2}{(1+\|u\|_2^2)^{1-\alpha}}
		\le
		C\frac{\|u\|_m^{\frac{2(2-\alpha)}{1-\alpha}}}{(1+\|u\|_2^2)^{2-\alpha}}
		+\|\nabla v\|_{\frac{2m}{m-2}}^{2(2-\alpha)}.
	\end{align}
	Since $2<m\le \frac{(r+2)p}{p+r},$
	taking $\theta=\frac{r}{r+2}$ yields $\frac1m\ge \frac{p+r}{(r+2)p}= \frac{\theta}{p}+\frac{1-\theta}{2}.$
	Hence, by interpolation inequality on bounded domains, it holds 
	\begin{align}\label{interpolation_1}
	    \|u\|_m^{\frac{2(2-\alpha)}{1-\alpha}}\le C\|u\|_{\frac{(r+2)p}{p+r}}^{\frac{2(2-\alpha)}{1-\alpha}}
	\le
	C\|u\|_p^{\frac{2(2-\alpha)r}{(1-\alpha)(r+2)}}
	\|u\|_2^{\frac{4(2-\alpha)}{(1-\alpha)(r+2)}}
	=
	C\big(\|u\|_p^{2r}\|u\|_2^4\big)^{\frac{2-\alpha}{(1-\alpha)(r+2)}}.
	\end{align}
	Since $\alpha<\frac{1}{2}<\frac{r}{r+1}$, we have $\frac{2-\alpha}{(1-\alpha)(r+2)}<1.$
	Therefore, using Young's inequality arrives
	\begin{align}
		\label{u_m_a}
		C\frac{\|u\|_m^{\frac{2(2-\alpha)}{1-\alpha}}}{(1+\|u\|_2^2)^{2-\alpha}}
		\le
		\varepsilon_4 \alpha\frac{{B_0^2}\|u\|_p^{2r}\|u\|_2^4}{(1+\|u\|_2^2)^{2-\alpha}}
		+C(\varepsilon_4).
	\end{align}
	For $\int_0^{t\wedge{\tau_N}}\|\nabla v(s)\|_{\frac{2m}{m-2}}^{2(2-\alpha)}\,ds$,  by Lemma \ref{lem_v}, we know 
	\begin{equation}
		\int_0^{t\wedge{\tau_N}}\|\nabla v(s)\|_{\frac{2m}{m-2}}^{2(2-\alpha)}\,ds
		\le
		Ct\|\nabla v_0\|_{\frac{2m}{m-2}}^{2(2-\alpha)}+C\int_0^{t\wedge{\tau_N}}\|u(s)\|_m^{2(2-\alpha)}\,ds.
	\end{equation}
	We further claim that for any $\varepsilon_5>0$,
	\begin{equation}
		\label{ine_u,m}
C\|u\|_m^{2(2-\alpha)}
\le\varepsilon_5
		\frac{\alpha{B_0^2}\|u\|_p^{2r}\|u\|_2^4}{(1+\|u\|_2^2)^{2-\alpha}}
		+
		C(\varepsilon_5).
	\end{equation}
    Indeed, if  $\|u\|_2^2\le 1$,
	then
	\[
	(1+\|u\|_2^2)^{2-\alpha}\le 2^{2-\alpha},
	\]
	and hence
	\[
	\frac{\alpha{B_0^2}\|u\|_p^{2r}\|u\|_2^4}{(1+\|u\|_2^2)^{2-\alpha}}
	\ge
	2^{-(2-\alpha)}\alpha{B_0^2}\|u\|_p^{2r}\|u\|_2^4.
	\]
	Similarly with \eqref{interpolation_1}, we use again the interpolation inequality with $\theta=\frac{r}{r+2}$ to get
	\[
	\|u\|_m^{2(2-\alpha)}
	\le C\|u\|_{\frac{(r+2)p}{p+r}}^{2(2-\alpha)}\le
	C\|u\|_p^{\frac{2(2-\alpha)r}{r+2}}
	\|u\|_2^{\frac{4(2-\alpha)}{r+2}}
	=
	C(\|u\|_p^{2r}\|u\|_2^4)^{\frac{2-\alpha}{r+2}}.
	\]
	Since $\frac{2-\alpha}{r+2}<1$, Young's inequality implies
	\begin{align}
	    \|u\|_m^{2(2-\alpha)}
	\le
	\varepsilon_5\frac{\alpha{B_0^2}\|u\|_p^{2r}\|u\|_2^4}{(1+\|u\|_2^2)^{2-\alpha}}
	+
	C(\varepsilon_5).
	\end{align}
	 If $\|u\|_2^2>1$, then
	\[
	(1+\|u\|_2^2)^{2-\alpha}\le 2^{2-\alpha}\|u\|_2^{2(2-\alpha)},
	\]
	and therefore
	\[
	\frac{\alpha{B_0^2}\|u\|_p^{2r}\|u\|_2^4}{(1+\|u\|_2^2)^{2-\alpha}}
	\ge
	2^{-(2-\alpha)}\alpha{B_0^2}\|u\|_p^{2r}\|u\|_2^{2\alpha}.
	\]
	Since
	\[
	m\le \frac{(r+2)p}{p+r}<\frac{2p(r+\alpha)}{2r+\alpha p},
	\]
	we may take $\theta=\frac{r}{r+\alpha}$ so that
	\[
	\frac1m\ge \frac{\theta}{p}+\frac{1-\theta}{2},
	\]
	Consequently, using the interpolation inequality yields
	\[
	\|u\|_m^{2(2-\alpha)}
	\le
	C\|u\|_p^{\frac{2(2-\alpha)r}{r+\alpha}}
	\|u\|_2^{\frac{2\alpha(2-\alpha)}{r+\alpha}}
	=
	C(\|u\|_p^{2r}\|u\|_2^{2\alpha})^{\frac{2-\alpha}{r+\alpha}}.
	\]
	Since $\frac{2-r}{2}<\alpha$, we have $\frac{2-\alpha}{r+\alpha}<1.$
	Thus, Young's inequality  gives
	\begin{align}
	    \|u\|_m^{2(2-\alpha)}
	\le
	\varepsilon_5\frac{\alpha{B_0^2}\|u\|_p^{2r}\|u\|_2^4}{(1+\|u\|_2^2)^{2-\alpha}}
	+
	C(\varepsilon_5).
	\end{align}
	Combining the above two cases, \eqref{ine_u,m} holds.
Thus, it follows  that
\begin{equation}
		\label{ine_v,m}
		\int_0^{t\wedge{\tau_N}}\|\nabla v(s)\|_{\frac{2m}{m-2}}^{2(2-\alpha)}\,ds
		\le
\varepsilon_5\int_0^{t\wedge{\tau_N}}\frac{\alpha{B_0^2}\|u(s)\|_p^{2r}\|u(s)\|_2^4}{(1+\|u(s)\|_2^2)^{2-\alpha}}\,ds+C(v_0,\varepsilon_5,t).
	\end{equation}
	Substituting  \eqref{es_a,q,2}, \eqref{u-v-term}, \eqref{u_m_a} and  \eqref{ine_v,m}
	into \eqref{L3}, we arrive at
	\begin{align}
		\label{La}
		\nonumber&\big(1+\|u(t\wedge{\tau_N})\|_2^2\big)^\alpha
		+(2\alpha-\varepsilon_1-\varepsilon_2)\int_0^{t\wedge{\tau_N}}
		\frac{\|\nabla u(s)\|_2^2}{(1+\|u(s)\|_2^2)^{1-\alpha}}\,ds\\\nonumber&+\bigl(\alpha-2\alpha^2-\varepsilon_3-\varepsilon_4-\varepsilon_5\bigr)
		\int_0^{t\wedge{\tau_N}}
		\frac{{B_0^2}\|u(s)\|_p^{2r}\|u(s)\|_2^4}
		{(1+\|u(s)\|_2^2)^{2-\alpha}}\,ds
		\nonumber\\
		\le&\big(1+\|u_0\|_2^2\big)^\alpha+2\alpha\sum_{k=1}^\infty\int_0^{t\wedge{\tau_N}}
		\frac{b_k\|u(s)\|_p^r\|u(s)\|_2^2}{(1+\|u(s)\|_2^2)^{1-\alpha}}dW_k(s)
		+C(\varepsilon_1, \varepsilon_2,\varepsilon_3,\varepsilon_4,\varepsilon_5,t,v_0).
	\end{align}
    Choosing $\varepsilon_i>0$, $1\le i\le 5$ sufficiently small such that
	\[
	2\alpha-\varepsilon_1-\varepsilon_2>0,
	\qquad
	\alpha-2\alpha^2-\varepsilon_3-\varepsilon_4-\varepsilon_5>0.
	\]
	Taking expectations in \eqref{La}, and using the definition of $\tau_N$, we see that the stochastic integral is a square-integrable  martingale with zero expectation. Then, we obtain
	\begin{align}
		\label{exp_u}
		\mathbb E\Big(\int_0^{t\wedge{\tau_N}}
		\frac{{B_0^2}\|u(s)\|_p^{2r}\|u(s)\|_2^4}
		{(1+\|u(s)\|_2^2)^{2-\alpha}}\,ds\Big)
		\le C(t,v_0,u_0).
	\end{align}
     Furthermore, taking the supremum over $t\in[0,T]$ in \eqref{La}, then taking expectations, we get
	\begin{align}
		\label{est_sup_u}
		&\mathbb E\Big(\sup_{0\le t\le T}(1+\|u(t\wedge{\tau_N})\|_2^2)^\alpha\Big)+\mathbb{E}\Big(\int_0^{T\wedge{\tau_N}}
		\frac{\|\nabla u(s)\|_2^2}{(1+\|u(s)\|_2^2)^{1-\alpha}}\,ds\Big)
		\nonumber\\
		&\le
		\mathbb E\big(\big(1+\|u_0\|_2^2\big)^\alpha\big)
		+2\alpha\mathbb E\Big(\sup_{0\le t\le T}
		\Big|
		\sum_{k=1}^\infty\int_0^{t\wedge{\tau_N}}
		\frac{b_k\|u(s)\|_p^r\|u(s)\|_2^2}{(1+\|u(s)\|_2^2)^{1-\alpha}}dW_k(s)
		\Big|\Big)
		\nonumber\\
&+C(T,v_0,u_0).
	\end{align}
	By the Burkholder-Davis-Gundy inequality and Young's inequality,  we have 
	\begin{align}
		\label{BDG-estimate}
		&2\alpha\mathbb E\Big(\sup_{0\le t\le T}
		\Big|
		\sum_{k=1}^\infty\int_0^{t\wedge{\tau_N}}
		\frac{b_k\|u(s)\|_p^r\|u(s)\|_2^2}{(1+\|u(s)\|_2^2)^{1-\alpha}}dW_k(s)
		\Big|\Big)
		\nonumber\\
		\le&
		C\mathbb E\Big[\Big(
		\int_0^{T\wedge{\tau_N}}
		\frac{{B_0^2}\|u(s)\|_p^{2r}\|u(s)\|_2^4}
		{(1+\|u(s)\|_2^2)^{2-2\alpha}}\,ds
		\Big)^{1/2}\Big]
		\nonumber\\
		=&
		C\mathbb E\Big[\Big(
		\int_0^{T\wedge{\tau_N}}
		\big(1+\|u(s)\|_2^2\big)^\alpha
		\frac{{B_0^2}\|u(s)\|_p^{2r}\|u(s)\|_2^4}
		{(1+\|u(s)\|_2^2)^{2-\alpha}}\,ds
		\Big)^{1/2}\Big]
		\nonumber\\
		\le&
		C\mathbb E\Big[
		\Big(
		\sup_{0\le t\le T}(1+\|u(t\wedge{\tau_N})\|_2^2)^\alpha
		\Big)^{1/2}
		\Big(
		\int_0^{T\wedge{\tau_N}}
		\frac{{B_0^2}\|u(s)\|_p^{2r}\|u(s)\|_2^4}
		{(1+\|u(s)\|_2^2)^{2-\alpha}}\,ds
		\Big)^{1/2}
		\Big]
		\nonumber\\
		\le&
		\frac12\mathbb E\Big(\sup_{0\le t\le T}(1+\|u(t\wedge{\tau_N})\|_2^2)^\alpha\Big)
		+C\mathbb E\Big(\int_0^{T\wedge{\tau_N}}
		\frac{{B_0^2}\|u(s)\|_p^{2r}\|u(s)\|_2^4}
		{(1+\|u(s)\|_2^2)^{2-\alpha}}\,ds\Big)
		\nonumber\\
		\le&
		\frac12\mathbb E\Big(\sup_{0\le t\le T}(1+\|u(t\wedge{\tau_N})\|_2^2)^\alpha\Big)
		+C(T,v_0,u_0),
	\end{align}
	where we have used the estimate \eqref{exp_u} in the last step.  Substituting \eqref{BDG-estimate} into \eqref{est_sup_u}, we conclude that
	\begin{align}
	    \mathbb E\Big(\sup_{0\le t\le T}(1+\|u(t\wedge\tau_N)\|_2^2)^\alpha\Big)+\mathbb{E}\Big(\int_0^{T\wedge{\tau_N}}
		\frac{\|\nabla u(s)\|_2^2}{(1+\|u(s)\|_2^2)^{1-\alpha}}\,ds\Big)
	\le C(T,v_0,u_0).
	\end{align}
    This completes the proof of Theorem \ref{thm_estimate}.
     \end{proof}
     We are now in a position to complete the proof of the main theorem.
    \begin{proof}[The proof of Theorem \ref{thm1}] 
Combining Theorem \ref{thm_Local} and Theorem\ref{thm3.2}, we  obtain a maximal local strong solution $(u,\tau^*)$. We use the uniform estimate \eqref{est_C(T)} in Theorem \ref{thm_estimate} to conclude that the solution is global and pathwise unique. Applying  Markov's inequality yields
    \begin{align*}
       & \mathbb{P}\Big\{\sup_{t\in[0,T]}\|u({t\wedge{\tau_N}})\|_{L^2(\mathcal O)}^2\ge \frac{N}{2}\Big\}\\\le& \mathbb{P}\Big\{\Big(1+\sup_{t\in[0,T]}\|u({t\wedge{\tau_N}})\|_{L^2(\mathcal O)}^2\Big)^\alpha\ge \Big(\frac{N}{2}\Big)^\alpha\Big\}\le \frac{2^\alpha}{N^\alpha}\mathbb{E}\Big(\sup_{0\le t\le T}(1+\|u(t\wedge\tau_N)\|_2^2)^\alpha\Big)\\\le& \frac{C(T)}{N^\alpha}.
    \end{align*}
    Similarly, we deduce
    \begin{align*}
    &\mathbb{P}\Big\{\int_0^{T\wedge{\tau_N}}
		\|\nabla u(s)\|_2^2\,ds\ge \frac{N}{2}\Big\}\\\le& 
        \mathbb{P}\Big\{\Big(1+\sup_{t\in[0,T]}\|u({t\wedge{\tau_N}})\|_{L^2(\mathcal O)}^2\Big)^{1-\alpha}\int_0^{T\wedge{\tau_N}}
		\frac{\|\nabla u(s)\|_2^2}{(1+\|u(s)\|_2^2)^{1-\alpha}}\,ds\ge \frac{N}{2}\Big\}\\
        \le&\mathbb{P}\Big\{\Big(1+\sup_{t\in[0,T]}\|u({t\wedge{\tau_N}})\|_{L^2(\mathcal O)}^2\Big)^{\alpha}\ge\Big(\frac{N}{2}\Big)^\alpha\Big\}+\mathbb{P}\Big\{\int_0^{T\wedge{\tau_N}}
		\frac{\|\nabla u(s)\|_2^2}{(1+\|u(s)\|_2^2)^{1-\alpha}}ds\ge \Big(\frac{N}{2}\Big)^\alpha\Big\}\\\le&\frac{2^\alpha}{N^\alpha}\mathbb{E}\Big(\sup_{0\le t\le T}(1+\|u(t\wedge\tau_N)\|_2^2)^\alpha\Big)+\frac{2^\alpha}{N^\alpha}\mathbb{E}\Big(\int_0^{T\wedge{\tau_N}}
		\frac{\|\nabla u(s)\|_2^2}{(1+\|u(s)\|_2^2)^{1-\alpha}}\,ds\Big)\\
        \le&\frac{C(T)}{N^\alpha}.
    \end{align*}
Furthermore, by the definition of stopping time ${\tau_N}$, it follows that
       \begin{align*}
       \nonumber\mathbb{P}\Big\{{\tau_N}\leq T\Big\}&=\mathbb{P}\Big\{\sup_{t\in[0,T]}\|u({t\wedge{\tau_N}})\|_{L^2(\mathcal O)}^2+\int_0^{T\wedge{\tau_N}}\|\nabla u(s)\|_{L^2(\mathcal{O})}^2\,ds\ge N\Big\}\\&\le \mathbb{P}\Big\{\sup_{t\in[0,T]}\|u({t\wedge{\tau_N}})\|_{L^2(\mathcal O)}^2\ge \frac{N}{2}\Big\}+\mathbb{P}\Big\{\int_0^{T\wedge{\tau_N}}\|\nabla u(s)\|_{L^2(\mathcal{O})}^2\,ds\ge\frac {N}{2}\Big\}
\\&\leq \frac{C(T)}{N^\alpha}\to 0,~(N\to \infty).
       \end{align*}
       
       Since $\tau_N\uparrow\tau^*$, we have
	\[
	\{\tau^*\le T\}=\bigcap_{N\in\mathbb N}\{\tau_N\le T\}.
	\]
	Therefore, by continuity of probability,
	\[
	\mathbb P(\tau^*\le T)=\lim_{N\to\infty}\mathbb P(\tau_N\le T)=0.
	\]
	As $T>0$ is arbitrary, it follows that $\mathbb P(\tau^*<\infty)=0$, i.e. $\tau^*=\infty$ $\mathbb P$-a.s. Therefore, the maximal local strong solution is global. The pathwise uniqueness follows from Theorem \ref{thm3.2}. Hence system \eqref{equ1} admits a global pathwise unique strong solution.
\end{proof}

    \nocite{*}
	\bibliography{refer} 
	\bibliographystyle{plain}  

\end{document}